\documentclass[a4paper, reqno, 11pt]{amsart}
\usepackage{color}
\makeatletter

\@addtoreset{equation}{section}
\makeatother
\usepackage{setspace}
\usepackage{xcolor}
\usepackage{here}
\usepackage{graphicx}
\usepackage{tikz}
\usepackage{placeins}
\usepackage{float}
\usepackage{hyperref}
\usepackage[T1]{fontenc}
\everymath{\displaystyle}
\usepackage{amsmath}
\usepackage{amssymb}
\usepackage{latexsym}
\usepackage{amsthm}
\usepackage{hyperref}
\usepackage{mathtools}
\usepackage{mathrsfs}
\newtheorem{Thm}{Theorem}[section]
\newtheorem{Que}[Thm]{Question}
\newtheorem{Lem}[Thm]{Lemma}
\newtheorem{Prop}[Thm]{Proposition}

\newtheorem{Cor}[Thm]{Corollary}

\theoremstyle{definition}

\newtheorem{Def}[Thm]{Definition}
\newtheorem{Rem}[Thm]{Remark}

\begin{document}

\title[]{The two-dimensional Matkowski--Sut\^o equation with holomorphic and strictly increasing generators}
\author{Kazuki Okamura}
\date{\today}
\address{Department of Mathematics, Faculty of Science, Shizuoka University, 836 Ohya, Suruga-ku, Shizuoka 422-8529, Japan}
\email{okamura.kazuki@shizuoka.ac.jp}
\keywords{Matkowski--Sut\^o equation; quasi-arithmetic means; holomorphic functions; monotone operators; Gauss composition.}
\subjclass[2020]{Primary 39B12; Secondary 39B22, 39B32, 26E60, 47H05, 52A10}

\begin{abstract}
We study the two-dimensional Matkowski--Sut\^o equation, which asks for two quasi-arithmetic means whose sum is twice the arithmetic mean, in two settings.
For holomorphic injective generators with convex images on a convex domain in the complex plane, the solutions are exactly the affine pairs and the exponential pairs with a nonzero complex exponent, up to affine changes of the generators. 
The admissible exponents depend on the shape of the domain and are described by a curvature criterion for its boundary. 
In the monotone-operator framework of T\'oth, we construct an infinite-dimensional family of non-affine shear pairs on the whole plane. 
Their generators are strictly increasing in the sense of monotone operators and need not be differentiable. 
These pairs solve the weighted equation for any number of variables. 
The rigidity of the one-dimensional problem, due to Dar\'oczy and P\'ales, persists under holomorphy but not under monotonicity.
\end{abstract}

\maketitle

\section{Introduction}

A quasi-arithmetic mean on an interval $I \subset \mathbb{R}$ is a two-variable mean of the form $A_{\varphi}(x,y) = \varphi^{-1}\left( \tfrac{1}{2}(\varphi(x) + \varphi(y)) \right)$, where the generator $\varphi$ is a continuous strictly monotone function on $I$.
These means were characterized axiomatically by Kolmogorov \cite{Kolmogorov}, Nagumo \cite{Nagumo} and de Finetti \cite{deFinetti}, and in the two-variable case by Acz\'el \cite{Aczel66} through the bisymmetry equation. 
For characterizations without continuity assumptions, see Burai--Kiss--Szokol \cite{BuraiKissSzokol}.
It is well known that two generators define the same mean if and only if they differ by an affine transformation; see Hardy--Littlewood--P\'olya \cite[Theorem 83 and Section 3.7]{HLP}.

The Matkowski--Sut\^o problem asks for all pairs of quasi-arithmetic means whose sum is twice the arithmetic mean, that is, for all continuous strictly monotone functions $\varphi, \psi$ on $I$ such that
\begin{equation}\label{eq:MS1d}
A_{\varphi}(x,y) + A_{\psi}(x,y) = x + y, \qquad x, y \in I .
\end{equation}
The equation expresses that the arithmetic mean is invariant under the mean-type mapping $(A_{\varphi}, A_{\psi})$; by the invariance principle of Matkowski \cite{Matkowski99b}, this holds if and only if the Gauss composition of $A_{\varphi}$ and $A_{\psi}$, 
that is, the common limit of the iteration $x_{n+1} = A_{\varphi}(x_n, y_n)$, $y_{n+1} = A_{\psi}(x_n, y_n)$, $x_0 = x, y_0 = y$, is the arithmetic mean $\tfrac{1}{2} (x+y)$.
The prototype of such an invariance is the arithmetic--geometric mean of Gauss, which is invariant under the pair formed by the arithmetic and the geometric mean \cite{Cox}. 
For the invariance problem in general, we refer to the survey \cite{JarczykJarczyk} and to \cite{Nielsen23}.
Equation \eqref{eq:MS1d} was solved by Sut\^o \cite{Suto1, Suto2} in 1914 for analytic generators of a real variable, by Matkowski \cite{Matkowski} under twice continuous differentiability, and by Dar\'oczy and P\'ales \cite{DP} under continuity and strict monotonicity alone.
Up to affine changes of the generators, the solutions are the pair of identity maps $(t,t)$ and the exponential pairs $(\exp(pt), \exp(-pt))$ with $p \in \mathbb{R} \setminus \{0\}$.
Sut\^o's argument, which is discussed in Remark \ref{rem:suto} below, differentiates the equation with respect to each variable separately and reduces it, through the more general equation $f_1(\varphi_1(x) + \psi_1(y)) + f_2(\varphi_2(x) + \psi_2(y)) = x + y$, to a functional equation with separated variables solved in \cite{Suto1}; he also observed that the invariance of a quasi-arithmetic mean under a pair of quasi-arithmetic means reduces to \eqref{eq:MS1d} by a change of variable \cite[Section  3]{Suto2}, so that \eqref{eq:MS1d} governs the Gauss composition within the class of quasi-arithmetic means \cite{DP}.
The main contribution in \cite{DP} is a highly sophisticated regularity theory which derives the differentiability of the generators from their continuity. 

In this paper we consider two-dimensional versions of the problem, with generators defined on subsets of the plane, and we ask whether the rigidity of the one-dimensional problem persists.
The definition of a quasi-arithmetic mean uses only the arithmetic mean of the values of the generator and an inverse of the generator, so it makes sense for injective mappings from a subset of $\mathbb{R}^2$ into $\mathbb{R}^2$, provided that the mean of two values lies in the domain of the inverse.

We consider two frameworks for defining quasi-arithmetic means on subsets of the plane. 
In the first one, we identify $\mathbb{R}^2$ with $\mathbb{C}$ and take as generators the holomorphic injective functions on a convex domain $U \subset \mathbb{C}$ whose image is convex. 
The inverse is holomorphic on the image, and the resulting means are holomorphic in both variables.
In the second one, introduced recently by T\'oth \cite{Toth}, the generators are strictly increasing mappings in the sense of monotone operators, that is, $\langle f(x) - f(y), x - y \rangle > 0$ for $x \neq y$, defined on a {\it closed} convex set, and the inverse is replaced by the extended monotone left inverse on the convex hull of the image. 
Vector-valued quasi-arithmetic means also appear in \cite{Leonetti, NielsenGSI}.
T\'oth proved that on the whole space the arithmetic mean is generated only by affine maps with positive definite linear part \cite[Theorem 4]{Toth}, and posed the equality problem for these means. 
The two frameworks are not comparable. 
A holomorphic generator with convex image need not be monotone, and a monotone holomorphic map need not have a convex image; see Subsection~\ref{subsec:comparison}.

Our main results are Theorems \ref{thm:main} and \ref{thm:shear}, stated in Subsections \ref{subsec:holo} and \ref{subsec:mon}, respectively.
Theorem \ref{thm:main} solves the equation completely in the holomorphic framework: the solutions are exactly the identity pairs $(z,z)$ and the exponential pairs $(\exp(pz), \exp(-pz))$ with $p \in \mathbb{C} \setminus \{0\}$, up to complex affine changes of the generators.
Thus the rigidity of the one-dimensional problem persists, and two features are new. 
The exponent is now complex, and the existence of nontrivial solutions on a given domain $U$ depends on whether $\exp(pz)$ and $\exp(-pz)$ are injective on $U$ and have convex images. 
In Section \ref{sec:P}, we examine these conditions using elementary properties of the exponential function and the standard transformation formula for boundary curvature under holomorphic maps. 
The admissible exponents form a closed disc when $U$ is a disc, a real interval when $U$ is a horizontal strip, and reduce to $p = 0$ when $U$ is a bounded convex polygon or a half-plane.
No regularity theory is required, since holomorphy provides it from the beginning, and the argument differs from that of Sut\^o (Remark \ref{rem:suto}).

Theorem \ref{thm:shear} shows that the situation is entirely different in the monotone framework.
For every function $F$ on $\mathbb{R}$ with $|F(s) - F(t)| < 2|s - t|$ for $s \neq t$, the {\it shear mappings} $(x,y) \mapsto (x, y \pm F(x))$ are strictly increasing on $\mathbb{R}^2$ and form a solution of the two-variable equation with equal weights.
In Section \ref{sec:monotone}, we prove that these pairs also solve the weighted equation in any number of variables (Theorem \ref{thm:shear-weighted}).
Unless $F$ is affine, neither of the two means is the arithmetic mean, so the equation has an infinite-dimensional family of non-affine solutions on the whole plane, in contrast with the one-dimensional case.
The same construction extends to every real Hilbert space of dimension at least two, by acting on a two-dimensional subspace and as the identity on its orthogonal complement; see Remark \ref{rem:shear-features}(4).
Moreover, $F$ need not be differentiable, so no regularity theory in the spirit of \cite{DP} can hold for monotone generators in two dimensions.
For nonconstant $F$, these shear generators are not holomorphic, in accordance with Theorem \ref{thm:main}. 
On the other hand, the exponential pairs of Theorem \ref{thm:main} are strictly increasing after a suitable normalization (Subsection~\ref{subsec:comparison}).
The rigidity of the Matkowski--Sut\^o equation is a consequence of holomorphy rather than of monotonicity.

As a further property of these solution pairs, Section~\ref{sec:gauss} studies their simultaneous iteration.
For every initial pair, the iterates converge to the arithmetic mean in both the holomorphic and shear cases, so their Gauss composition is arithmetic.
For the exponential pairs with nonzero exponent, a change of variables reduces the iteration to repeated squaring; the shear pairs reach their common value after at most two iterations.

The paper is organized as follows.
Subsections \ref{subsec:holo} and \ref{subsec:mon} below introduce the two frameworks and state the main results. 
Section \ref{sec:proof} contains the fourth-order expansion at the diagonal and the proof of Theorem \ref{thm:main}. 
Section \ref{sec:P} describes the admissible exponents.
Section \ref{sec:monotone} introduces weighted means, proves the full version of Theorem \ref{thm:shear}, and discusses the properties of the shear solutions.
Section~\ref{sec:gauss} studies the Gauss composition of the holomorphic and shear solution pairs.
Section~\ref{sec:conclusion} compares the two frameworks and discusses open problems concerning the classification of solutions and the role of the domain.

\subsection{Holomorphic generators}\label{subsec:holo}

Throughout, a domain is a nonempty connected open subset of $\mathbb{C}$.
Let $U \subset \mathbb{C}$ be a convex domain.

\begin{Def}\label{def:H}
$\mathcal{H}(U)$ denotes the set of holomorphic injective functions $\varphi \colon U \to \mathbb{C}$ such that $\varphi(U)$ is convex.
For $\varphi \in \mathcal{H}(U)$, the quasi-arithmetic mean with generator $\varphi$ is
\begin{equation}\label{eq:qam}
A_{\varphi}(z,w) \coloneqq  \varphi^{-1}\left(\frac{\varphi(z)+\varphi(w)}{2}\right), \qquad z, w \in U .
\end{equation}
\end{Def}

By \cite[Theorem 10.33]{Rudin1987}, for $\varphi \in \mathcal{H}(U)$, the derivative $\varphi'$ has no zeros and the inverse map $\varphi^{-1} \colon \varphi(U) \to U$ is holomorphic.
Since $\varphi(U)$ is convex, the midpoint $\frac{\varphi(z)+\varphi(w)}{2}$ belongs to $\varphi(U)$, so \eqref{eq:qam} is well defined.
The map $A_{\varphi} \colon U \times U \to U$ is jointly holomorphic, symmetric, and satisfies $A_{\varphi}(z,z) = z$.

For $\varphi_1, \varphi_2 \in \mathcal{H}(U)$, we consider the Matkowski--Sut\^o equation
\begin{equation}\label{eq:MS}
A_{\varphi_1}(z,w) + A_{\varphi_2}(z,w) = z + w, \qquad z, w \in U .
\end{equation}
For $p \in \mathbb{C}$, let 
\begin{equation}\label{eq:chi}
\chi_p (z) \coloneqq \begin{cases} \exp(pz)  & p \ne 0\\ z & p = 0\end{cases}.
\end{equation}

Let 
\[ P_U \coloneqq  \left\{ p \in \mathbb{C} : \chi_p \in \mathcal{H}(U) \text{ and } \chi_{-p} \in \mathcal{H}(U) \right\}.\]

\begin{Thm}\label{thm:main}
Let $U \subset \mathbb{C}$ be a convex domain and let $\varphi_1, \varphi_2 \in \mathcal{H}(U)$.
Then the following are equivalent. \\
(1) The equation \eqref{eq:MS} holds. \\
(2) There exist $p \in \mathbb{C}$, $a_1, a_2 \in \mathbb{C} \setminus \{0\}$ and $b_1, b_2 \in \mathbb{C}$ such that $\varphi_1 = a_1 \chi_p + b_1$ and $\varphi_2 = a_2 \chi_{-p} + b_2$ on $U$. \\
In this case $p = \varphi_1''/\varphi_1'$ on $U$ and $p \in P_U$.
\end{Thm}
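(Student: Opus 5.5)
The plan is to extract differential information from \eqref{eq:MS} by expanding each mean near the diagonal. For $\varphi\in\mathcal{H}(U)$, set $r_\varphi\coloneqq\varphi''/\varphi'$, which is holomorphic on $U$ because $\varphi'$ has no zeros. Fix $z\in U$ and let $h$ be small, so that $z\pm h\in U$. By joint holomorphy, $h\mapsto A_\varphi(z+h,z-h)$ is holomorphic near $0$. It is even in $h$ by symmetry, and it equals $z$ at $h=0$. Hence
\[
A_\varphi(z+h,z-h)=z+c_2(z)h^2+c_4(z)h^4+O(h^6).
\]

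\textbf{Step 1: compute the coefficients.} Write $\delta=A_\varphi(z+h,z-h)-z$. The equation $\varphi(z+\delta)=\tfrac12(\varphi(z+h)+\varphi(z-h))$ becomes
\[
\varphi'\delta+\tfrac12\varphi''\delta^2+\dots=\tfrac12\varphi''h^2+\tfrac1{24}\varphi''''h^4+\dots .
\]
Use the identities
\[
\varphi'''/\varphi'=r'+r^2,\qquad \varphi''''/\varphi'=r''+3rr'+r^3,\qquad r=r_\varphi .
\]
Matching orders then gives
\[
c_2=\tfrac12 r,\qquad c_4=\tfrac1{24}(r''+3rr'+r^3)-\tfrac18 r^3 .
\]

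\textbf{Step 2: (1) $\Rightarrow$ (2).} Substitute into \eqref{eq:MS}; the right-hand side is exactly $2z$. The $h^2$ coefficient gives $r_1+r_2=0$, where $r_i=r_{\varphi_i}$. Put $r=r_1$ and $r_2=-r$ into the $h^4$ coefficient. The $r''$ terms cancel, the $r^3$ terms cancel, and the $3rr'$ terms add up. What remains is $\tfrac14 rr'=0$, that is, $(r^2)'=0$ on $U$.

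Since $U$ is connected, $r^2$ is constant, and so the holomorphic function $r$ is a constant $p$. (A holomorphic function whose square is constant takes values in a two-point set, hence is constant by continuity.) Then $\varphi_1''=p\varphi_1'$ and $\varphi_2''=-p\varphi_2'$. Solving these gives $\varphi_1=a_1\chi_p+b_1$ and $\varphi_2=a_2\chi_{-p}+b_2$, with $a_i\neq0$ by injectivity. This also shows $p=\varphi_1''/\varphi_1'$.

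\textbf{Step 3: (2) $\Rightarrow$ (1).} Affine changes do not alter the means, so we may take $\varphi_1=\chi_p$ and $\varphi_2=\chi_{-p}$. The case $p=0$ is trivial. For $p\ne0$ we have $e^{pA_1}=\tfrac12(e^{pz}+e^{pw})$, and
\[
e^{pA_2}=\frac{2}{e^{-pz}+e^{-pw}}=\frac{2e^{p(z+w)}}{e^{pz}+e^{pw}} .
\]
Multiplying gives $e^{p(A_1+A_2)}=e^{p(z+w)}$. Therefore $A_1+A_2-(z+w)$ takes values in $(2\pi i/p)\mathbb{Z}$. This function is continuous on the connected set $U\times U$ and vanishes on the diagonal, so it vanishes identically.

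\textbf{Step 4: $p\in P_U$.} Since $\chi_{\pm p}$ is an affine image of $\varphi_i$ under an invertible affine map, $\chi_{\pm p}$ is injective with convex image, so $p\in P_U$.

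The main obstacle is the bookkeeping of the fourth-order inversion in Step 1. The key point is that the $r''$ and $r^3$ contributions cancel between the two generators, leaving only the $rr'$ term.
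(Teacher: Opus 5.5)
Your proof is correct and follows the paper's route. A fourth-order expansion at the diagonal yields $r_1+r_2=0$ from the $h^2$-coefficient and $rr'=0$ from the $h^4$-coefficient; the ODE then gives the exponential/affine forms, and the exponential identity gives sufficiency. Two details differ mildly. Your symmetric expansion of $A_\varphi(z+h,z-h)$, with $c_4=\frac{r''+3rr'-2r^3}{24}$ (consistent with Lemma~\ref{lem:expansion}), avoids the series reversion. Your sufficiency step uses the discreteness of $(2\pi i/p)\mathbb{Z}$ on the connected set $U\times U$, which is slightly shorter than the paper's injectivity-on-$\Omega$ plus identity-theorem argument.
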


Two generators $\varphi, \psi \in \mathcal{H}(U)$ are called {\it affinely equivalent} 
if $\psi = a\varphi + b$ for some $a \in \mathbb{C} \setminus \{0\}$ and $b \in \mathbb{C}$; by Lemma \ref{lem:affine} below this holds if and only if $A_{\varphi} = A_{\psi}$.
Pairs are called affinely equivalent if they are so componentwise.

\begin{Cor}\label{cor:bijection}
Let $U \subset \mathbb{C}$ be a convex domain.
The map $p \mapsto (\chi_p, \chi_{-p})$ induces a bijection from $P_U$ onto the set of solutions $(\varphi_1,\varphi_2) \in \mathcal{H}(U)^2$ of \eqref{eq:MS} modulo affine equivalence.
\end{Cor}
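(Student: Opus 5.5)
The corollary follows from Theorem \ref{thm:main} together with Lemma \ref{lem:affine}; here we use only the direction that $\psi = a\varphi+b$ implies $A_\psi = A_\varphi$, which is immediate from \eqref{eq:qam}. There are three things to check: the map is well defined into the set of solution classes, it is surjective, and it is injective.

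\emph{Well defined.} For $p \in P_U$ we have $\chi_p,\chi_{-p} \in \mathcal{H}(U)$ by the definition of $P_U$. The implication (2)$\Rightarrow$(1) of Theorem \ref{thm:main}, with $a_1=a_2=1$ and $b_1=b_2=0$, shows that $(\chi_p,\chi_{-p})$ solves \eqref{eq:MS}. Passing to its affine class gives a map from $P_U$ to the set of solution classes.

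\emph{Surjective.} Let $(\varphi_1,\varphi_2) \in \mathcal{H}(U)^2$ solve \eqref{eq:MS}. By (1)$\Rightarrow$(2) of Theorem \ref{thm:main} there are $p$, $a_i \neq 0$ and $b_i$ with $\varphi_1 = a_1\chi_p+b_1$ and $\varphi_2 = a_2\chi_{-p}+b_2$. The last assertion of the theorem gives $p \in P_U$. The displayed formulas say exactly that $(\varphi_1,\varphi_2)$ is affinely equivalent to $(\chi_p,\chi_{-p})$, so its class is the image of $p$.

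\emph{Injective.} Suppose $(\chi_p,\chi_{-p})$ and $(\chi_q,\chi_{-q})$ are affinely equivalent. Then $\chi_q = a\chi_p + b$ on $U$ for some $a \neq 0$ and some $b$. The invariant $\varphi''/\varphi'$ is unchanged under $\varphi \mapsto a\varphi+b$, and $\chi_p''/\chi_p' = p$ in both cases: for $p\neq 0$ it is $p^2e^{pz}/(pe^{pz}) = p$, and for $p=0$ it is $0$. Hence $q = p$. This is also the content of the identity $p = \varphi_1''/\varphi_1'$ in Theorem \ref{thm:main}.

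The only point needing care is the bookkeeping: equivalence of pairs is componentwise, and the exponent is read off from the first component alone, so the second component plays no role in injectivity. There is no real analytic obstacle, since all the work is in Theorem \ref{thm:main}.
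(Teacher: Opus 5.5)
Your proposal is correct and follows the paper's proof essentially step for step. Existence comes from Proposition~\ref{prop:sufficient}, which you reach through (2)$\Rightarrow$(1) of Theorem~\ref{thm:main}; surjectivity comes from (1)$\Rightarrow$(2) together with $p\in P_U$; and injectivity comes from reading off $p=\chi_p''/\chi_p'$, which affine changes leave invariant.
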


If $p \in P_U \setminus \mathbb{R}$ and $U\cap\mathbb{R}\neq\varnothing$, 
then $A_{\chi_p}$ is not real-valued on $(U\cap\mathbb{R})^2$, so these complex exponents produce means which have no counterpart in the real theory.
In Section \ref{sec:P}, we show that $P_U$ is the closed disc of radius $1/r$ when $U$ is a disc of radius $r$ (Corollary \ref{cor:disc}), the real interval $[-\pi/d, \pi/d]$ when $U$ is a horizontal strip of width $d$ (Proposition \ref{prop:strip}), and $\{0\}$ when $U$ is a bounded convex polygon (Proposition \ref{prop:segment}) or when $U - U = \mathbb{C}$ (Proposition \ref{prop:basic}), for instance a half-plane or a sector.

\subsection{Strictly increasing generators}\label{subsec:mon}

In this subsection and in Section \ref{sec:monotone}, $\langle \cdot, \cdot \rangle$ and $|\cdot|$ denote the Euclidean inner product and norm on $\mathbb{R}^2$. 
Moreover, $ \operatorname{conv} (A)$ denotes the convex hull of $A \subset \mathbb{R}^2$.

\begin{Def}\label{def:increasing}
Let $C \subset \mathbb{R}^2$ be a nonempty closed convex set.
A mapping $f \colon C \to \mathbb{R}^2$ is {\it increasing} if $\langle f(x) - f(y), x - y \rangle \ge  0$ for all $x, y \in C$, and {\it strictly increasing} if $\langle f(x) - f(y), x - y \rangle > 0$ for all $x, y \in C$ with $x \neq y$.
\end{Def}

A strictly increasing mapping is injective.
By \cite[Theorems 2 and 3]{Toth}, for every strictly increasing $f \colon C \to \mathbb{R}^2$, there exists a unique increasing mapping $f^{(-1)} \colon \operatorname{conv} (f(C)) \to C$ such that $f^{(-1)}(f(x)) = x$ for every $x \in C$, and this mapping is continuous.
It is called the {\it extended monotone left inverse} of $f$. 
If $f(C)$ is convex, it is the ordinary inverse of $f$.

\begin{Def}\label{def:vqam}
Let $C \subset \mathbb{R}^2$ be a nonempty closed convex set and let $f \colon C \to \mathbb{R}^2$ be strictly increasing.
The two-variable quasi-arithmetic mean with generator $f$ is
\begin{equation*}
\mathcal{M}_f(x,y) \coloneqq f^{(-1)}\left(\frac{f(x)+f(y)}{2}\right), \qquad x,y \in C .
\end{equation*}
\end{Def}

The Matkowski--Sut\^o equation in this framework asks for strictly increasing mappings $f, g \colon C \to \mathbb{R}^2$ such that
\begin{equation}\label{eq:MSmon}
\mathcal{M}_f(x,y) + \mathcal{M}_g(x,y) = x+y, \qquad x,y \in C .
\end{equation}
This is the counterpart of \eqref{eq:MS} in the monotone framework.
Affine pairs $f(t) = A_1 t + b_1$, $g(t) = A_2 t + b_2$ with positive definite matrices $A_1, A_2$ are solutions, since both means are then arithmetic. 
Here a (not necessarily symmetric) real matrix $A$ is called positive definite when $\langle Ax,x\rangle > 0$ for every $x \neq 0$, equivalently when its symmetric part $(A+A^{\mathsf T})/2$ is positive definite. 
For a solution of \eqref{eq:MSmon}, if either mean is the arithmetic mean, then so is the other. 
On $C=\mathbb{R}^2$,  \cite[Theorem 4]{Toth} implies that both generators are affine. 
In dimension one, on $C = \mathbb{R}$, the same equation has the non-affine solutions $(e^{pt}, -e^{-pt})$, $p > 0$, by \cite{DP}, and both generators are strictly increasing.

For a function $F \colon \mathbb{R} \to \mathbb{R}$, define
\begin{equation}\label{eq:shear}
\theta_F(x,y) \coloneqq  ( x, \, y + F(x) ), \quad \eta_F(x,y) \coloneqq  ( x, \, y - F(x) ), \quad (x,y) \in \mathbb{R}^2 .
\end{equation}
Both maps are bijections of $\mathbb{R}^2$, and $\eta_F = \theta_{-F} = \theta_{F}^{-1}$.

\begin{Thm}\label{thm:shear}
Let $F \colon \mathbb{R} \to \mathbb{R}$ be a function. \\
(1) $\theta_F$ is strictly increasing if and only if $|F(s) - F(t)| < 2|s - t|$ for every $s, t \in \mathbb{R}$ with $s \neq t$. \\
(2) If the condition in (1) holds, then $\theta_F^{(-1)} = \eta_F$, $\eta_F^{(-1)} = \theta_F$, and
\begin{equation*}
\mathcal{M}_{\theta_F}(x,y) + \mathcal{M}_{\eta_F}(x,y) = x+y, \qquad x,y \in \mathbb{R}^2 .
\end{equation*}
(3) If the condition in (1) holds, then $\mathcal{M}_{\theta_F}(x,y) = \tfrac{1}{2}(x+y)$ for every $x, y \in \mathbb{R}^2$ if and only if $F$ is affine.
\end{Thm}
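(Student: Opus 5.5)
For part (1), I will compute the monotonicity form directly. For $u=(s,y)$ and $v=(t,y')$, set $a = s-t$, $b = y-y'$ and $\delta = F(s)-F(t)$. Then
\[
\langle \theta_F(u)-\theta_F(v), u-v\rangle = a^2 + b(b+\delta) = a^2 + b^2 + \delta b .
\]
If $a=0$, then $\delta=0$ and the form equals $b^2>0$ whenever $u\neq v$. If $a\neq 0$, the form is a quadratic in $b$ that is positive for every real $b$ exactly when its discriminant is negative, i.e. $\delta^2 < 4a^2$, which is $|F(s)-F(t)|<2|s-t|$. Conversely, if $|\delta|\ge 2|a|$ for some $s\neq t$, I take $b=-\delta/2$. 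This gives the value $a^2-\delta^2/4\le 0$ with $u\neq v$, so $\theta_F$ is not strictly increasing. Since $F$ enters only through the sign-symmetric bound $|\delta|$, the same condition characterizes $\eta_F=\theta_{-F}$.

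For part (2), $\theta_F$ is a bijection of $\mathbb{R}^2$, so its image is convex. By the remark after Definition~\ref{def:increasing}, $\theta_F^{(-1)}$ is therefore the ordinary inverse, namely $\eta_F$, and symmetrically $\eta_F^{(-1)}=\theta_F$. Next I compute the means. For $x=(x_1,x_2)$ and $y=(y_1,y_2)$, put $m=(x_1+y_1)/2$ and $\bar F = (F(x_1)+F(y_1))/2$. Then
\[
\mathcal{M}_{\theta_F}(x,y)=\eta_F\bigl(m,\tfrac{x_2+y_2}{2}+\bar F\bigr)=\bigl(m,\tfrac{x_2+y_2}{2}+\bar F - F(m)\bigr),
\]
and likewise $\mathcal{M}_{\eta_F}(x,y)=\bigl(m,\tfrac{x_2+y_2}{2}-\bar F + F(m)\bigr)$. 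Adding the two gives $x+y$.

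For part (3), the formula above shows that $\mathcal{M}_{\theta_F}$ is the arithmetic mean if and only if $F\bigl(\tfrac{s+t}{2}\bigr)=\tfrac{F(s)+F(t)}{2}$ for all $s,t$, i.e. $F$ satisfies Jensen's equation on $\mathbb{R}$. The hypothesis of (1) makes $F$ Lipschitz with constant $2$, hence continuous. A continuous Jensen function is affine: $G=F-F(0)$ is additive and continuous, hence linear. Conversely, an affine $F$ satisfies Jensen's equation trivially.

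The only step needing care is the converse direction of part (1), where the choice $b=-\delta/2$ does the work; everything else is direct computation.
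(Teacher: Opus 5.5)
Your proposal is correct and follows essentially the same route as the paper. You use the same quadratic form $a^2+b^2+bc$ with the choice $b=-c/2$ for the converse, the same explicit formula for the means obtained by applying $\eta_F$, and the same reduction of (3) to the continuous Jensen equation. The only differences are cosmetic: you use the discriminant instead of the bound $b^2+bc\ge -c^2/4$, and you identify $\theta_F^{(-1)}=\eta_F$ via the convex-image remark rather than by citing T\'oth's uniqueness theorem directly.
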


The condition in Theorem \ref{thm:shear}(1) holds for every Lipschitz function with Lipschitz constant less than $2$ and is sharp, and for every non-affine $F$ satisfying it, for instance $F(s) = \sin s$ or $F(s) = |s|$, Theorem \ref{thm:shear} provides a solution of \eqref{eq:MSmon} on $\mathbb{R}^2$ in which neither of the two means is the arithmetic mean.
The family of these solutions is infinite-dimensional, its members need not be differentiable, and they are not holomorphic unless $F$ is constant; see Section \ref{sec:monotone}.
Theorem \ref{thm:shear} is therefore the counterpart of Theorem \ref{thm:main} in the monotone framework. 
We emphasize that holomorphy forces the rigidity of the one-dimensional problem, whereas monotonicity does not.

\section{Expansion at the diagonal and proof of Theorem \ref{thm:main}}\label{sec:proof}

\begin{Lem}\label{lem:affine}
Let $U \subset \mathbb{C}$ be a convex domain and let $\varphi, \psi \in \mathcal{H}(U)$.
Then $A_{\varphi} = A_{\psi}$ on $U \times U$ if and only if there exist $a \in \mathbb{C}\setminus\{0\}$ and $b \in \mathbb{C}$ such that $\psi = a\varphi + b$ on $U$.
\end{Lem}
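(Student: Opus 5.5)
The easy direction is a direct computation. Suppose $\psi = a\varphi + b$ with $a \neq 0$. Then $\psi^{-1}(u) = \varphi^{-1}\bigl((u-b)/a\bigr)$ for $u \in \psi(U)$, and
\[
\frac{\psi(z)+\psi(w)}{2} = a\,\frac{\varphi(z)+\varphi(w)}{2} + b .
\]
Applying $\psi^{-1}$ to this midpoint gives $\varphi^{-1}\bigl(\tfrac12(\varphi(z)+\varphi(w))\bigr)$. Hence $A_\psi = A_\varphi$ on $U\times U$.

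For the converse, the plan is to transport everything to the convex domain $V := \varphi(U)$. Set $h := \psi\circ\varphi^{-1} \colon V \to \psi(U)$. This map is holomorphic and injective, since $\varphi^{-1}$ is holomorphic by \cite[Theorem 10.33]{Rudin1987}. Take $u = \varphi(z)$ and $v = \varphi(w)$ in $V$, and apply $\psi$ to the identity $A_\varphi(z,w) = A_\psi(z,w)$. Since $A_\varphi(z,w) = \varphi^{-1}\bigl(\tfrac{u+v}{2}\bigr)$, this gives
\[
h\Bigl(\frac{u+v}{2}\Bigr) = \frac{h(u)+h(v)}{2}, \qquad u,v \in V ,
\]
which is Jensen's equation for a holomorphic function on a convex open set. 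Convexity of $V$ guarantees that $\tfrac{u+v}{2} \in V$, so the equation makes sense for all $u,v \in V$.

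Next I show that $h$ is affine. Fix $u_0 \in V$ and put $v = 2u_0 - u$, which lies in $V$ for $u$ in a small disc around $u_0$. The equation becomes $h(u) + h(2u_0-u) = 2h(u_0)$. Differentiating twice in $u$ gives $h''(u) + h''(2u_0 - u) = 0$, and evaluating at $u = u_0$ yields $h''(u_0) = 0$. Since $u_0$ was arbitrary, $h'' \equiv 0$ on $V$. Because $V$ is connected (it is convex), $h(u) = au + b$ for some constants $a, b \in \mathbb{C}$.

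Then $a \neq 0$ because $h$ is injective (equivalently, $h' = \psi'(\varphi^{-1})\,(\varphi^{-1})' \neq 0$). Finally, $\psi = h\circ\varphi = a\varphi + b$ on $U$.

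The only delicate point is the domain bookkeeping: the midpoint must lie in $V$, and $V$ must be connected for $h''\equiv 0$ to force a single affine expression. Both follow from the convexity of $\varphi(U)$, which is built into the definition of $\mathcal{H}(U)$. No regularity argument is needed beyond holomorphy.
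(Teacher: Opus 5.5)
Your proof is correct and follows the paper's argument. Both transport the problem to $V=\varphi(U)$ via $\psi\circ\varphi^{-1}$, obtain Jensen's equation, differentiate, and conclude by connectedness of $V$, with the only cosmetic difference being that the paper differentiates once in $u$ to get $\chi'\bigl(\tfrac{u+v}{2}\bigr)=\chi'(u)$ and uses that the midpoints form a neighborhood of $u$, whereas you differentiate twice along $v=2u_0-u$ to get $h''\equiv 0$.
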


\begin{proof}
If $\psi = a\varphi + b$, then $\psi^{-1}(v) = \varphi^{-1}((v-b)/a)$ and $A_{\psi} = A_{\varphi}$ follows from \eqref{eq:qam}.

Conversely, assume $A_{\varphi} = A_{\psi}$.
The set $V \coloneqq  \varphi(U)$ is a convex domain and $\chi \coloneqq  \psi \circ \varphi^{-1}$ is holomorphic and injective on $V$.
Applying $\psi$ to $A_{\varphi}(z,w) = A_{\psi}(z,w)$ and writing $u = \varphi(z)$, $v = \varphi(w)$, we obtain
\begin{equation*}
\chi\left(\frac{u+v}{2}\right) = \frac{\chi(u)+\chi(v)}{2}, \qquad u, v \in V .
\end{equation*}
Differentiating with respect to $u$ gives $\chi'\left(\frac{u+v}{2}\right) = \chi'(u)$ for all $u, v \in V$.
For fixed $u \in V$ the set $\{ (u+v)/2 : v \in V \}$ is an open neighborhood of $u$.
Hence $\chi'$ is locally constant on $V$, and since $V$ is connected, $\chi' \equiv a$ for some constant $a$ by the identity theorem.
Thus $\chi(u) = au + b$ on $V$, and $a \neq 0$ because $\chi$ is injective.
Therefore $\psi = a\varphi + b$ on $U$.
\end{proof}

Now we consider the Taylor expansion of a quasi-arithmetic mean. 

For $\varphi \in \mathcal{H}(U)$ we write
\begin{equation}\label{eq:a}
a \coloneqq  \frac{\varphi''}{\varphi'} = (\log \varphi')' ,
\end{equation}
which is a holomorphic function on $U$.
It is invariant under affine changes of $\varphi$.

\begin{Lem}\label{lem:expansion}
Let $U \subset \mathbb{C}$ be a convex domain, $\varphi \in \mathcal{H}(U)$, and let $a$ be as in \eqref{eq:a}.
Then for every $z \in U$, as $h \to 0$,
\begin{equation}\label{eq:expansion}
A_{\varphi}(z, z+h) = z + \frac{h}{2} + \frac{a}{8} h^2 + \frac{a'}{16} h^3 + \frac{7a'' + 3aa' - 2a^3}{384} h^4 + O(h^5),
\end{equation}
where $a, a', a''$ are evaluated at $z$.
\end{Lem}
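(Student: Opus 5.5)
The plan is to reduce to a normalized generator, write down the Taylor series of $\varphi$ at $z$ in terms of $a$ and its derivatives, and then invert this series to fourth order. Everything is a convergent power series, because $\varphi$ and $\varphi^{-1}$ are holomorphic (by \cite[Theorem 10.33]{Rudin1987}, $\varphi'$ has no zeros). So the expansion \eqref{eq:expansion} can be obtained by formal series reversion, and the remainder is automatically $O(h^5)$.

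\emph{Normalization.} Fix $z\in U$. By Lemma \ref{lem:affine} the mean $A_\varphi$ is unchanged when $\varphi$ is replaced by $(\varphi-\varphi(z))/\varphi'(z)$. The function $a$ in \eqref{eq:a} is also unchanged by this replacement. So I may assume $\varphi(z)=0$ and $\varphi'(z)=1$. From $\varphi''=a\varphi'$, differentiating twice gives
\[
\varphi'''=(a'+a^2)\varphi', \qquad \varphi''''=(a''+3aa'+a^3)\varphi' .
\]
Hence, with $a,a',a''$ evaluated at $z$,
\[
\varphi(z+t)=t+\frac{a}{2}t^2+\frac{a'+a^2}{6}t^3+\frac{a''+3aa'+a^3}{24}t^4+O(t^5).
\]

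\emph{Reversion.} Write $A_\varphi(z,z+h)=z+t$. Then $t=t(h)$ is holomorphic near $h=0$, satisfies $t(0)=0$, and is determined by
\[
\varphi(z+t)=\tfrac12\varphi(z+h).
\]
I make the ansatz $t=c_1h+c_2h^2+c_3h^3+c_4h^4+O(h^5)$, substitute it into the left-hand side, and compare coefficients with the right-hand side
\[
\tfrac12\Bigl(h+\tfrac a2h^2+\tfrac{a'+a^2}{6}h^3+\tfrac{a''+3aa'+a^3}{24}h^4\Bigr).
\]
Comparing coefficients order by order gives:
\begin{itemize}
\item[(i)] $c_1=\tfrac12$;
\item[(ii)] from $c_2+\tfrac a2 c_1^2=\tfrac a4$, $c_2=\tfrac a8$;
\item[(iii)] the third-order equation gives $c_3=\tfrac{a'}{16}$, after the $a^2$ terms cancel;
\item[(iv)] the fourth-order equation gives $c_4$, which should come out as $(7a''+3aa'-2a^3)/384$.
\end{itemize}

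\emph{Main obstacle.} The only real difficulty is the bookkeeping at fourth order, where the contributions $a\,c_1c_3$, $a\,c_2^2$, $(a'+a^2)c_1^2c_2$ and the quartic term all enter. To guard against arithmetic slips I would check the result against the exponential generator $\varphi=\chi_p$. There $a\equiv p$, and
\[
A_\varphi(z,z+h)=z+\tfrac1p\log\tfrac{1+e^{ph}}{2}=z+\tfrac h2+\tfrac p8h^2-\tfrac{p^3}{192}h^4+O(h^5),
\]
which matches $-2a^3/384$ and the vanishing $h^3$ coefficient. The $a''$ and $aa'$ coefficients would be checked the same way, using for instance $\varphi(w)=w^2$ on a half-plane (where $a=1/w$) or $\varphi=\log$ (where $a=-1/w$), whose means are explicit.
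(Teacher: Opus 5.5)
Your route is the paper's. Your normalized $\varphi(z+\cdot)$ is exactly the paper's $S(t)=(\varphi(z+t)-\varphi(z))/\varphi'(z)$, you expand it via $\varphi'''=(a'+a^2)\varphi'$ and $\varphi''''=(a''+3aa'+a^3)\varphi'$, and you revert. The only difference is bookkeeping. The paper first computes $S^{-1}$ to fourth order and then substitutes $\eta=S(h)/2$, whereas you solve $S(t)=S(h)/2$ directly by undetermined coefficients, which saves one intermediate series. Your steps (i)--(iii) are correct.

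What is missing is step (iv): you assert it (``should come out as'') rather than derive it. This coefficient is the whole content of the lemma, and it is exactly what Proposition \ref{prop:necessary} uses, so it has to be computed. It does work out. The $h^4$ equation is
\[
c_4+\frac a2\bigl(2c_1c_3+c_2^2\bigr)+\frac{a'+a^2}{2}\,c_1^2c_2+\frac{a''+3aa'+a^3}{24}\,c_1^4=\frac{a''+3aa'+a^3}{48},
\]
so with $c_1=\tfrac12$, $c_2=\tfrac a8$, $c_3=\tfrac{a'}{16}$,
\[
c_4=\frac{7(a''+3aa'+a^3)}{384}-\Bigl(\frac{aa'}{32}+\frac{a^3}{128}\Bigr)-\Bigl(\frac{aa'}{64}+\frac{a^3}{64}\Bigr)=\frac{7a''+3aa'-2a^3}{384}.
\]

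Your test cases also pass. They give $-p^3/192$ for $\chi_p$, $3/(128z^3)$ for $\sqrt{(z^2+(z+h)^2)/2}$, and $-5/(128z^3)$ for $\sqrt{z(z+h)}$. They could even replace the computation. By the scaling $\varphi(w)\mapsto\varphi(\lambda w)$, $c_4$ is a universal weighted-homogeneous polynomial of the form $\alpha a''+\beta aa'+\gamma a^3$. The three data vectors $(a'',aa',a^3)=(0,0,p^3)$, $(2,-1,1)/z^3$, $(-2,-1,-1)/z^3$ are linearly independent. Hence they force $(\alpha,\beta,\gamma)=(7,3,-2)/384$.
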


\begin{proof}
Fix $z \in U$.
From \eqref{eq:a},
\begin{equation*}
\varphi'' = a \varphi', \qquad \varphi''' = (a' + a^2) \varphi', \qquad \varphi'''' = (a'' + 3aa' + a^3) \varphi' .
\end{equation*}
Let $S(t) \coloneqq  \frac{\varphi(z+t) - \varphi(z)}{\varphi'(z)}$. 
Taylor expansion at $t = 0$ gives
\begin{equation*}
S(t) = t + \frac{a}{2} t^2 + \frac{a' + a^2}{6} t^3 + \frac{a'' + 3aa' + a^3}{24} t^4 + O(t^5) .
\end{equation*}
We see that $S(0) = 0$ and $S^{\prime}(0) = 1$. 
Comparing coefficients in $S(S^{-1}(\eta)) = \eta$, we obtain that 
\begin{equation*}
S^{-1}(\eta) = \eta - \frac{a}{2}\eta^2 + \frac{2a^2 - a'}{6} \eta^3 - \frac{6a^3 - 7aa' + a''}{24} \eta^4 + O(\eta^5) .
\end{equation*}
By \eqref{eq:qam}, 
\[ A_{\varphi}(z, z+h) = \varphi^{-1}\left( \varphi(z) + \varphi'(z) \eta \right) = z + S^{-1}(\eta)\] 
with
\begin{equation*}
\eta \coloneqq  \frac{S(h)}{2} = \frac{h}{2} + \frac{a}{4} h^2 + \frac{a'+a^2}{12} h^3 + \frac{a'' + 3aa' + a^3}{48} h^4 + O(h^5) .
\end{equation*}
Therefore, we have the following Taylor expansions: 
\begin{align*}
\eta^2 &= \frac{h^2}{4} + \frac{a}{4} h^3 + \left( \frac{a'+a^2}{12} + \frac{a^2}{16} \right) h^4 + O(h^5), \\
\eta^3 &= \frac{h^3}{8} + \frac{3a}{16} h^4 + O(h^5), \qquad
\eta^4 = \frac{h^4}{16} + O(h^5) .
\end{align*}
Collecting terms, the coefficient of $h^2$ in $S^{-1}(\eta)$ is $\frac{a}{4} - \frac{a}{8} = \frac{a}{8}$,
the coefficient of $h^3$ is $\frac{a'+a^2}{12} - \frac{a^2}{8} + \frac{2a^2 - a'}{48} = \frac{a'}{16}$,
and the coefficient of $h^4$ is
\begin{multline*}
\frac{a'' + 3aa' + a^3}{48} - \frac{a}{2}\left( \frac{a'+a^2}{12} + \frac{a^2}{16} \right) + \frac{2a^2 - a'}{6} \cdot \frac{3a}{16} - \frac{6a^3 - 7aa' + a''}{384} \\
= \frac{7a'' + 3aa' - 2a^3}{384} .
\end{multline*}
This proves \eqref{eq:expansion}.
\end{proof}

For $\varphi = \chi_p$ we have $a \equiv p$, and \eqref{eq:expansion} reads $A_{\chi_p}(z,z+h) = z + \frac{h}{2} + \frac{p}{8}h^2 - \frac{p^3}{192} h^4 + O(h^5)$.
For $p\ne0$ and each fixed $z\in U$, the following identity holds for all sufficiently small $h$:
\[ A_{\chi_p}(z,z+h) = z + \frac{h}{2} + \frac{1}{p} \operatorname{Log} \left(\cosh \frac{ph}{2} \right),\]
where $\operatorname{Log}$ denotes the principal branch of the logarithm.

\begin{Prop}\label{prop:necessary}
Let $U \subset \mathbb{C}$ be a convex domain and let $\varphi_1, \varphi_2 \in \mathcal{H}(U)$ satisfy \eqref{eq:MS}.
Put $\alpha_j \coloneqq  \varphi_j''/\varphi_j'$ for $j = 1,2$.
Then \\
(1) $\alpha_1 + \alpha_2 = 0$ on $U$, equivalently, $\varphi_1' \varphi_2'$ is constant on $U$. \\
(2) $\alpha_1$ is constant on $U$.
\end{Prop}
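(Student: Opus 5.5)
Fix $z \in U$ and substitute the expansion of Lemma \ref{lem:expansion} into \eqref{eq:MS} with $w = z+h$; comparing coefficients of powers of $h$ gives the result. Since $U$ is open, $z+h \in U$ for all small $h$. Both sides of \eqref{eq:MS} are then holomorphic in $h$ near $0$, and the right-hand side equals $2z + h$. Summing \eqref{eq:expansion} for $\varphi_1$ and $\varphi_2$, the terms $z + h/2$ add up to exactly $2z+h$. Hence all coefficients of $h^k$ with $k \ge 2$ in the sum must vanish.

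The coefficient of $h^2$ is $(\alpha_1(z)+\alpha_2(z))/8$. Its vanishing gives $\alpha_1 + \alpha_2 = 0$ at every $z\in U$, which is (1). The equivalent form follows from $\alpha_j = (\log\varphi_j')'$: locally $(\log(\varphi_1'\varphi_2'))' = \alpha_1+\alpha_2$, or more cleanly $(\varphi_1'\varphi_2')' = (\alpha_1+\alpha_2)\varphi_1'\varphi_2'$, so $\varphi_1'\varphi_2'$ is constant on the connected set $U$, and conversely. The $h^3$ coefficient $(\alpha_1'+\alpha_2')/16$ then vanishes automatically and gives no new information.

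The main step is the $h^4$ coefficient. Write $\alpha = \alpha_1$, so $\alpha_2 = -\alpha$, $\alpha_2' = -\alpha'$ and $\alpha_2'' = -\alpha''$. The sum of the two $h^4$ coefficients is
\[
\frac{7\alpha'' + 3\alpha\alpha' - 2\alpha^3 + \bigl(-7\alpha'' + 3\alpha\alpha' + 2\alpha^3\bigr)}{384} = \frac{6\alpha\alpha'}{384}.
\]
This is because the terms $a''$ and $a^3$ are odd under $a \mapsto -a$, while $aa'$ is even. Its vanishing gives $\alpha\alpha' = 0$ on $U$, that is, $(\alpha^2)' = 0$, so $\alpha^2$ is constant on the connected set $U$. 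Since $\alpha$ is holomorphic with constant square, it is constant. Indeed, if $\alpha^2 \equiv c \neq 0$, then $\alpha$ is a continuous function into the discrete set $\{\pm\sqrt{c}\}$ on a connected set. If $c = 0$, then $\alpha \equiv 0$. Alternatively, use the identity theorem: $\alpha\alpha' = 0$ implies either $\alpha\equiv 0$ or $\alpha'$ vanishes on the open set where $\alpha\ne0$, hence everywhere. This proves (2).

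The only delicate point is the bookkeeping at order four, namely checking that the odd terms cancel and the $aa'$ term survives. This relies entirely on the coefficient $\frac{7a''+3aa'-2a^3}{384}$ from Lemma \ref{lem:expansion}, which we take as given.
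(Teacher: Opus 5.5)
Your proposal is correct and follows essentially the same route as the paper. The paper also reads off the $h^2$ and $h^4$ Taylor coefficients of $A_{\varphi_1}(z,z+h)+A_{\varphi_2}(z,z+h)-(2z+h)$ from Lemma~\ref{lem:expansion}, derives the equivalent form via $(\varphi_1'\varphi_2')'=(\alpha_1+\alpha_2)\varphi_1'\varphi_2'$, reduces the $h^4$ coefficient to $6\alpha_1\alpha_1'/384$, and concludes from $(\alpha_1^2)'=0$ with the same two-case connectedness argument.
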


\begin{proof}
Fix $z \in U$.
For $h$ in a neighborhood of $0$, 
$A_{\varphi_1}(z,z+h) + A_{\varphi_2}(z,z+h) - (2z+h)$ 
is a holomorphic function of $h$ which vanishes identically.
By Lemma \ref{lem:expansion} its Taylor coefficients at $h = 0$ are
\begin{equation*}
\frac{\alpha_1 + \alpha_2}{8}, \qquad \frac{\alpha_1' + \alpha_2'}{16}, \qquad \frac{7(\alpha_1'' + \alpha_2'') + 3(\alpha_1 \alpha_1' + \alpha_2 \alpha_2') - 2(\alpha_1^3 + \alpha_2^3)}{384}
\end{equation*}
for $h^2, h^3, h^4$, respectively, and all of them vanish.

The vanishing of the $h^2$-coefficient gives $\alpha_1 + \alpha_2 = 0$ on $U$.
Since $(\varphi_1'\varphi_2')' = (\alpha_1 + \alpha_2)\varphi_1'\varphi_2' = 0$, this is equivalent to $\varphi_1'\varphi_2'$ being constant.
This proves (1).

Substituting $\alpha_2 = -\alpha_1$ into the $h^4$-coefficient, the terms $\alpha_1'' + \alpha_2''$ and $\alpha_1^3 + \alpha_2^3$ vanish, and the coefficient reduces to $\frac{6 \alpha_1 \alpha_1'}{384}$.
Hence $\alpha_1 \alpha_1' = 0$, that is, $(\alpha_1^2)' = 0$ on $U$.
Since $U$ is connected, $\alpha_1^2 \equiv c$ for a constant $c$.
If $c = 0$, then $\alpha_1 \equiv 0$.
If $c \neq 0$, then $\alpha_1$ is a continuous function on the connected set $U$ with values in the two-point set $\{\sqrt{c}, -\sqrt{c}\}$, hence constant.
This proves (2).
\end{proof}

\begin{Rem}\label{rem:local}
The proof of Proposition \ref{prop:necessary} uses \eqref{eq:MS} only for $(z,w)$ in a neighborhood of the diagonal $\{(z,z) : z \in U\}$.
The convexity of $U$ and of $\varphi_j(U)$ serves only to define $A_{\varphi_j}$ on all of $U \times U$.
\end{Rem}

We show that exponential pairs are solutions. 

\begin{Prop}\label{prop:sufficient}
Let $U \subset \mathbb{C}$ be a convex domain and let $p \in P_U$.
Then $(\chi_p, \chi_{-p})$ satisfies \eqref{eq:MS}.
\end{Prop}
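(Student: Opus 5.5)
The plan is to verify the identity directly, separating the trivial case $p=0$ from $p\neq 0$. If $p=0$, then $\chi_0=\chi_{-0}$ is the identity. Both means are then $\tfrac12(z+w)$, so \eqref{eq:MS} holds at once.

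For $p\neq0$, fix $z,w\in U$ and put $m\coloneqq A_{\chi_p}(z,w)$ and $n\coloneqq A_{\chi_{-p}}(z,w)$. Since $p\in P_U$, both means are well defined on $U\times U$ with values in $U$. By definition,
\[
e^{pm}=\tfrac12\left(e^{pz}+e^{pw}\right),\qquad e^{-pn}=\tfrac12\left(e^{-pz}+e^{-pw}\right).
\]
The key algebraic identity is
\[
\tfrac12\left(e^{-pz}+e^{-pw}\right)=e^{-p(z+w)}\cdot\tfrac12\left(e^{pw}+e^{pz}\right)=e^{-p(z+w)}e^{pm}.
\]
It follows that $e^{-pn}=e^{-p(z+w-m)}$. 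The candidate $n^{*}\coloneqq z+w-m$ therefore satisfies $\chi_{-p}(n^{*})=\tfrac12(\chi_{-p}(z)+\chi_{-p}(w))$, and the task is to conclude that $n=n^{*}$.

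This is the only real obstacle. Injectivity of $\chi_{-p}$ gives $n=n^*$ only once we know $n^*\in U$, and $z+w-m$ need not obviously lie in $U$. What we do get for free is $n-n^*\in \tfrac{2\pi i}{p}\mathbb{Z}$.

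To handle this, I will argue by continuity and connectedness. Since $U$ is convex, $U\times U$ is connected. The maps $(z,w)\mapsto m$ and $(z,w)\mapsto n$ are continuous, indeed holomorphic. Hence $k(z,w)\coloneqq \tfrac{p}{2\pi i}\bigl(n-(z+w-m)\bigr)$ is a continuous integer-valued function on $U\times U$, and so it is constant. On the diagonal $z=w$ we have $m=n=z$, so $k=0$ there. Thus $k\equiv0$, which means $m+n=z+w$ everywhere, and this is \eqref{eq:MS}.

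As an alternative to this argument, one could note that $m+n-z-w$ is holomorphic on $U\times U$, vanishes near the diagonal by the local analysis behind Lemma \ref{lem:expansion}, and hence vanishes identically by the identity theorem. The integer-valued argument is more elementary, so I would use that one.
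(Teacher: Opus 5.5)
Your proof is correct, but its final step works differently from the paper's. Both arguments start from the same computation, $\chi_{-p}(z+w-m)=\chi_{-p}(A_{\chi_{-p}}(z,w))$.

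\textbf{The paper's route.} It argues locally first. It considers the open set $\Omega=\{(z,w)\in U\times U : z+w-A_{\chi_p}(z,w)\in U\}$, which contains the diagonal. Injectivity of $\chi_{-p}$ on $U$ gives \eqref{eq:MS} on $\Omega$. It then extends this to all of $U\times U$ by the one-variable identity theorem, applied to the holomorphic function $A_{\chi_p}+A_{\chi_{-p}}-z-w$.

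\textbf{Your route.} You argue globally from the start. The two candidates always differ by an element of $\frac{2\pi i}{p}\mathbb{Z}$, whether or not $z+w-m$ lies in $U$. The continuous integer-valued function $k$ on the connected set $U\times U$ vanishes on the diagonal, hence everywhere.

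\textbf{Comparison.} Your argument needs only continuity of the two means and the discreteness of the fibers of the exponential, with no analytic continuation. It also shows, as a byproduct, that $z+w-A_{\chi_p}(z,w)$ always lies in $U$. The paper's argument uses the exponential only through the algebraic identity and injectivity of $\chi_{-p}$ on $U$ near the diagonal, and then relies on holomorphy. This fits its emphasis in Remark \ref{rem:local} on the germ of the equation along the diagonal. It would also adapt to generators whose fibers are not explicitly known.

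\textbf{A caution about your alternative.} Lemma \ref{lem:expansion} controls only finitely many Taylor coefficients, so by itself it does not show that $m+n-z-w$ vanishes near the diagonal. That local vanishing comes either from the paper's injectivity argument on $\Omega$, or from the closed form $A_{\chi_p}(z,z+h)=z+\frac{h}{2}+\frac{1}{p}\operatorname{Log}\cosh\frac{ph}{2}$, whose last term is odd in $p$.
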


\begin{proof}
For $p = 0$ both means are the arithmetic mean and \eqref{eq:MS} is clear.
Let $p \ne 0$, $f \coloneqq  \chi_p$, $g \coloneqq  \chi_{-p}$, so that $g(\zeta) f(\zeta) = 1$ for all $\zeta \in \mathbb{C}$.
For $(z,w) \in U \times U$ put $m \coloneqq  A_f(z,w) \in U$, so that $f(m) = \frac{f(z)+f(w)}{2}$.
Then
\begin{align*}
g(z+w-m) &= \exp(-p(z+w)) \exp(pm) \\
&= \exp(-p(z+w)) \cdot \frac{\exp(pz) + \exp(pw)}{2} \\
&= \frac{\exp(-pw) + \exp(-pz)}{2} = \frac{g(z)+g(w)}{2} = g\left( A_g (z,w) \right) .
\end{align*}
Let $\Omega \coloneqq  \left\{ (z,w) \in U \times U : z + w - A_f(z,w) \in U \right\}$.
Since $A_f$ is continuous and $U$ is open, $\Omega$ is open in $\mathbb{C}^2$ and it contains the diagonal because $A_f(z,z) = z$.
For $(z,w) \in \Omega$,  $z+w-m \in U$ and $A_g(z,w) \in U$. 
They have the same image under $g$. 
Since $g$ is injective on $U$, $z + w - A_f(z,w) = A_g(z,w)$ on $\Omega$.
The function $\Phi(z,w) \coloneqq  A_f(z,w) + A_g(z,w) - z - w$ is holomorphic on $U \times U$, which is connected, and vanishes on the nonempty open set $\Omega$. 
For each fixed $z\in U$, the holomorphic function $w \mapsto \Phi(z,w)$ vanishes in an open neighborhood of $z$. 
By the one-variable identity theorem, it vanishes throughout $U$. 
Since $z$ was arbitrary, $\Phi = 0$ on $U\times U$.
\end{proof}

\begin{proof}[Proof of Theorem \ref{thm:main}]
(1) $\Rightarrow$ (2).
By Proposition \ref{prop:necessary}(2), $\varphi_1''/\varphi_1' \equiv p$ for some $p \in \mathbb{C}$.
Then $\left(\varphi_1' \exp(-pz)\right)' = (\varphi_1'' - p\varphi_1') \exp(-pz) = 0$, so $\varphi_1' = C \exp(pz)$ with a constant $C$.
Since $\varphi_1'$ does not vanish, $C \ne 0$. 
If $p \ne 0$, then $\varphi_1 = \frac{C}{p} \exp(pz) + b_1$ for some $b_1 \in \mathbb{C}$, and if $p = 0$, then $\varphi_1 = Cz + b_1$ for some $b_1 \in \mathbb{C}$.
In both cases $\varphi_1 = a_1 \chi_p + b_1$ with $a_1 \ne 0$.
By Proposition \ref{prop:necessary}(1), 
$\varphi_2' = c/\varphi_1' = \frac{c}{C} \exp(-pz)$ with a constant $c \ne 0$.  
The same argument gives that $\varphi_2 = a_2\chi_{-p}+b_2$ on $U$ for some $a_2\in \mathbb{C} \setminus\{0\}$ and $b_2 \in \mathbb C$.

(2) $\Rightarrow$ (1).
Affine maps preserve injectivity and convexity of the image, so $\chi_p, \chi_{-p} \in \mathcal{H}(U)$, that is, $p \in P_U$.
By Lemma \ref{lem:affine}, $A_{\varphi_1} = A_{\chi_p}$ and $A_{\varphi_2} = A_{\chi_{-p}}$, and \eqref{eq:MS} follows from Proposition \ref{prop:sufficient}.

The last assertions follow from $\varphi_1''/\varphi_1' = p$ and $p\in P_U$ because $\chi_p, \chi_{-p}\in \mathcal{H}(U)$.
\end{proof}

\begin{proof}[Proof of Corollary \ref{cor:bijection}]
For $p \in P_U$ the pair $(\chi_p, \chi_{-p})$ is a solution by Proposition \ref{prop:sufficient}, and every solution is affinely equivalent to such a pair by Theorem \ref{thm:main}.
If $(\chi_p, \chi_{-p})$ and $(\chi_q, \chi_{-q})$ are affinely equivalent, then $\chi_q = a\chi_p + b$, so $\chi_q' = a\chi_p'$ and $q = \chi_q''/\chi_q' = \chi_p''/\chi_p' = p$.
\end{proof}

\begin{Rem}\label{rem:suto}
Proposition \ref{prop:necessary} and the resulting uniqueness part of Theorem \ref{thm:main} are complex counterparts of the result of Sut\^o \cite[Section  2]{Suto2}, and the two arguments are different.
Sut\^o works with analytic functions of independent real variables whose inverses are single-valued \cite[p.~82 and p.~99]{Suto2}.
He treats the equation $f_1(\varphi_1(x) + \psi_1(y)) + f_2(\varphi_2(x) + \psi_2(y)) = x + y$ in six unknown functions.
Differentiating with respect to $x$ and with respect to $y$, he eliminates $f_1'$ and $f_2'$, and in the generic case the substitution $\xi = \varphi_1(x)$, $\eta = \psi_1(y)$ leads to an equation of the form
\begin{equation*}
f_1'(\xi + \eta) = \frac{A(\xi) + B(\eta)}{C(\xi) + D(\eta)},
\end{equation*}
whose analytic solutions he had determined in \cite[Section  3]{Suto1} by differentiation, by the linear independence of derivatives, and by solving ordinary differential equations.
A case analysis then produces a list of solution families, and the Matkowski--Sut\^o equation is the special case $\psi_i = \varphi_i$, $f_i(X) = \varphi_i^{-1}(X/2)$, for which only the exponential pairs and the affine pairs remain \cite[Section  2]{Suto2}.
Thus Sut\^o classifies a more general equation in six unknown functions by a case analysis, whereas the proof of Proposition \ref{prop:necessary} uses a fourth-order expansion at the diagonal to determine the possible generators.
Only the germ of \eqref{eq:MS} along the diagonal enters this necessity argument (Remark \ref{rem:local}), and the sufficiency in Proposition \ref{prop:sufficient} follows from the identity theorem.
The complex exponent $p$ and the dependence of $P_U$ on the domain have no counterpart in Sut\^o's real setting.
Sut\^o also noted that the invariance of a quasi-arithmetic mean $A_{\psi}$ under a pair $(A_{\psi_1}, A_{\psi_2})$ reduces to \eqref{eq:MS} for the generators $\psi_1 \circ \psi^{-1}$ and $\psi_2 \circ \psi^{-1}$ by the change of variable $\xi = \psi(x)$, and he recorded the invariance of the geometric mean under the arithmetic--harmonic pair as the example $\psi (x) = \log x$ \cite[Section  3]{Suto2}.
\end{Rem}

\section{Admissible exponents}\label{sec:P}

By Corollary \ref{cor:bijection}, the solutions of \eqref{eq:MS} on $U$ are parametrized by $P_U$.
This section describes $P_U$.

We first give elementary properties. 

\begin{Prop}\label{prop:basic}
Let $U \subset \mathbb{C}$ be a convex domain. \\
(1) $0 \in P_U$ and $P_U = -P_U$. \\
(2) $P_{c + \lambda U} = \lambda^{-1} P_U$ for every $c \in \mathbb{C}$ and $\lambda \in \mathbb{C}\setminus\{0\}$. \\
(3) Let $p \in \mathbb{C}\setminus\{0\}$ and let $K \subset \mathbb{C}$ be a nonempty convex set. Then $\chi_p$ is injective on $K$ if and only if $2\pi i/p \notin K - K$. \\
(4) If $U - U = \mathbb{C}$, then $P_U = \{0\}$.
\end{Prop}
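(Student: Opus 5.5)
The four parts are essentially independent, and I will prove them in order, using only the definitions of $\chi_p$, $\mathcal{H}(U)$ and $P_U$.

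For (1), $\chi_0(z)=z$ is injective with image $U$, which is convex, so $0\in P_U$. The definition of $P_U$ is symmetric under $p\mapsto -p$, since it requires both $\chi_p$ and $\chi_{-p}$ to lie in $\mathcal{H}(U)$. Hence $P_U=-P_U$.

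For (2), put $U'=c+\lambda U$ and take $q\neq 0$. For $z=c+\lambda u$ we have $\chi_q(z)=e^{qc}\,\chi_{q\lambda}(u)$. Thus $\chi_q|_{U'}$ is the composition of the affine bijection $U\to U'$, $u\mapsto c+\lambda u$, with $\chi_{q\lambda}|_U$, followed by multiplication by the nonzero constant $e^{qc}$. Injectivity and convexity of the image are preserved under these operations, so $\chi_q\in\mathcal{H}(U')$ if and only if $\chi_{q\lambda}\in\mathcal{H}(U)$. The same holds with $-q$ in place of $q$. Therefore $q\in P_{U'}$ if and only if $\lambda q\in P_U$, which gives $P_{U'}=\lambda^{-1}P_U$. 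The case $q=0$ is covered by (1).

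For (3), $\exp(pz)=\exp(pw)$ holds if and only if $p(z-w)\in 2\pi i\mathbb{Z}$, that is, $z-w=2\pi i k/p$ for some $k\in\mathbb{Z}$. So $\chi_p$ is injective on $K$ if and only if $2\pi ik/p\notin K-K$ for every $k\neq 0$. If $2\pi i/p\in K-K$, injectivity fails directly. For the converse, suppose $2\pi i k/p=z-w$ with $z,w\in K$ and $k\neq 0$. Since $K-K$ is symmetric we may take $k\ge 1$. Then $w+\tfrac1k(z-w)$ is a convex combination of $w$ and $z$, so it lies in $K$ by convexity. Hence $2\pi i/p=\bigl(w+\tfrac1k(z-w)\bigr)-w\in K-K$. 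This step, reducing a general multiple to the primitive period, is the only point that needs care, and convexity handles it.

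For (4), let $p\neq 0$. Since $U-U=\mathbb{C}$ contains $2\pi i/p$, part (3) with $K=U$ shows that $\chi_p$ is not injective on $U$, so $p\notin P_U$. Combined with $0\in P_U$ from (1), this gives $P_U=\{0\}$.
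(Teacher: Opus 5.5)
Your proof is correct and follows the paper's argument essentially step by step, including the reduction in (3) from a multiple $2\pi i k/p$ to the primitive period via convexity. The only cosmetic difference is in (3): you apply convexity to $K$ directly through the point $w+\tfrac1k(z-w)$, whereas the paper applies it to the convex set $K-K$ using $0\in K-K$.
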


Part (3) is stated for convex sets because it will also be applied to the closure of a convex domain.

\begin{proof}
(1) is immediate from the definition of $P_U$. 

(2) For $z \in U$ we have $\chi_p(c + \lambda z) = \exp(pc) \chi_{p\lambda}(z)$ if $p \neq 0$, and $\chi_0(c + \lambda z) = c + \lambda \chi_0(z)$.
Since affine maps preserve injectivity and convexity of the image, $\chi_p \in \mathcal{H}(c + \lambda U)$ if and only if $\chi_{p\lambda} \in \mathcal{H}(U)$.
The same holds with $-p$ in place of $p$, and (2) follows.

(3) For $z, w \in K$ we have $\exp(pz) = \exp(pw)$ if and only if $z - w \in \frac{2\pi i}{p} \mathbb{Z}$.
Hence $\chi_p$ is injective on $K$ if and only if $(K - K) \cap \frac{2\pi i}{p}\mathbb{Z} = \{0\}$.
The set $K - K$ is convex, symmetric, and contains $0$.
If $\frac{2\pi i k}{p} \in K - K$ for some integer $k \neq 0$, we may assume $k \ge  1$ by symmetry, and then $\frac{2\pi i}{p} = \frac{1}{k} \cdot \frac{2\pi i k}{p} + \left( 1 - \frac{1}{k} \right) \cdot 0 \in K - K$ by convexity.
This proves (3).

(4) If $p \neq 0$, then $2\pi i/p \in \mathbb{C} = U - U$, so $\chi_p$ is not injective on $U$ by (3).
\end{proof}

Proposition \ref{prop:basic}(4) applies to half-planes and to sectors $\{ z \neq 0 : \alpha < \arg z < \beta \}$ with $0 < \beta - \alpha \le  \pi$, since the difference set of a convex cone with nonempty interior is a linear subspace with nonempty interior, hence equal to $\mathbb{C}$.

For a regular $C^2$ curve $\Gamma$ parametrized by $\gamma$, the signed curvature is
\begin{equation}\label{eq:curvature}
\kappa_{\Gamma} = \frac{\operatorname{Im}\left( \gamma'' \, \overline{\gamma'} \right)}{|\gamma'|^3} .
\end{equation}
If $\gamma$ is parametrized by arc length with unit tangent $\tau \coloneqq  \gamma'$, then $\gamma'' = i \kappa_{\Gamma} \tau$.

For a domain $D$ whose boundary is a $C^2$ Jordan curve, 
the positive orientation of $\partial D$ is the one for which $D$ lies on the left, and then $D$ is convex if and only if the signed curvature of $\partial D$ is nonnegative everywhere; see \cite[Problems 1.7.5 and 1.7.6, pp.~26--27]{Toponogov2006}. 

\begin{Lem}\label{lem:curvature}
Let $U \subset \mathbb{C}$ be a bounded convex domain whose boundary $\partial U$ is a $C^2$ Jordan curve, parametrized by arc length by $\gamma = \gamma(s)$ with positive orientation, let $\tau \coloneqq  \gamma'$, and let $\kappa$ denote the signed curvature of $\partial U$.
Let $f$ be holomorphic on an open set containing $\overline{U}$, injective on $\overline{U}$, and such that $f' \neq 0$ on $\partial U$.
Then $f(U)$ is a convex domain if and only if
\begin{equation}\label{eq:criterion}
\kappa(s) + \operatorname{Im}\left( \tau(s) \, \frac{f''(\gamma(s))}{f'(\gamma(s))} \right) \ge  0 \qquad \text{for all } s .
\end{equation}
\end{Lem}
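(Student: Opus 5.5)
The plan is to push the boundary curve forward by $f$, compute its signed curvature with \eqref{eq:curvature}, and then apply the convexity criterion of \cite{Toponogov2006} to $f(U)$.

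\textbf{Step 1: the image is a Jordan domain.} Since $f$ is injective and continuous on the compact set $\overline{U}$, it is a homeomorphism of $\overline{U}$ onto $f(\overline{U})$. The map $f$ is open on $U$, so $f(U)$ is a domain. It is bounded by $f(\partial U)$, which is again a Jordan curve; indeed $\partial f(U) = f(\partial U)$ by invariance of domain, since $f$ is a homeomorphism of a closed disc-like set.

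\textbf{Step 2: regularity and orientation.} Put $\Gamma \coloneqq f\circ\gamma$. Because $f'\neq 0$ on $\partial U$, we have $\Gamma' = f'(\gamma)\tau \neq 0$, so $\Gamma$ is a regular $C^2$ curve. It remains to check that $\Gamma$ is positively oriented with respect to $f(U)$. At a boundary point, the inward normal $i\tau$ is mapped by $df$ to $f'\cdot i\tau = i\,\Gamma'$, because a holomorphic map with nonzero derivative is orientation preserving and conformal. Points $\gamma(s)+\epsilon i\tau$ with small $\epsilon>0$ lie in $U$, so their images $\Gamma(s)+\epsilon i\Gamma'(s)+o(\epsilon)$ lie in $f(U)$. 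Hence $f(U)$ lies to the left of $\Gamma$. To make this rigorous, I would note that near $\gamma(s)$ the map $f$ is a local diffeomorphism, use the one-sided neighbourhood argument, and use that $f$ is injective on $\overline U$ so that the image of $U$ lies on one side locally.

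\textbf{Step 3: curvature computation.} This step is routine. We have
\[ \Gamma'' = f''(\gamma)\tau^2 + f'(\gamma)\tau' = f''(\gamma)\tau^2 + i\kappa f'(\gamma)\tau . \]
Then
\[ \Gamma''\overline{\Gamma'} = f''\overline{f'}\,\tau^2\bar\tau + i\kappa|f'|^2 = |f'|^2\left(\frac{f''}{f'}\tau + i\kappa\right), \]
using $|\tau|=1$. Taking imaginary parts and dividing by $|\Gamma'|^3 = |f'|^3$ gives
\[ \kappa_\Gamma = \frac{\kappa + \operatorname{Im}(\tau f''/f')}{|f'(\gamma)|}. \]
Its sign is therefore the sign of the left side of \eqref{eq:criterion}.

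\textbf{Step 4: conclusion.} By the cited criterion for positively oriented $C^2$ Jordan boundaries, $f(U)$ is convex if and only if $\kappa_\Gamma\ge 0$ everywhere, which is equivalent to \eqref{eq:criterion}.

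The main obstacle is Step 2, specifically justifying that $\partial f(U)=f(\partial U)$ and that the orientation is positive. The computation itself is mechanical.
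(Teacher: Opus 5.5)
Your proposal is correct and follows the paper's proof essentially step for step: $f$ is a homeomorphism of $\overline U$, and $\Gamma = f\circ\gamma$ parametrizes $\partial f(U)$ as a regular, positively oriented $C^2$ Jordan curve. The same computation gives $\kappa_\Gamma = \bigl(\kappa + \operatorname{Im}(\tau f''/f')\bigr)/|f'(\gamma)|$, and the convexity criterion finishes the argument.

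The step you flag as the main obstacle is easier than you suggest and needs no invariance of domain. Compactness gives $\overline{f(U)} = f(\overline U)$, and injectivity then gives $\partial f(U) = f(\overline U)\setminus f(U) = f(\partial U)$; this is exactly the paper's argument. Your normal-vector argument for the orientation is a more detailed version of the paper's one-line remark that $f$ preserves orientation.
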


\begin{proof}
Since $f$ is continuous and injective on the compact set $\overline{U}$, it is a homeomorphism of $\overline{U}$ onto $f(\overline{U})$.
The set $f(U)$ is open by the open mapping theorem, $f(\overline{U}) = \overline{f(U)}$, and therefore $\partial f(U) = f(\overline{U}) \setminus f(U) = f(\partial U)$.
Thus $\Gamma \coloneqq  f \circ \gamma$ parametrizes $\partial f(U)$. 

It is a $C^2$ Jordan curve, regular because $f' \ne 0$ on $\partial U$, and positively oriented because $f$ preserves orientation. 
Since $\gamma$ is parametrized by arc length, $|\tau| = |\gamma^{\prime}| = 1$. 
From $\Gamma' = f'(\gamma)\tau$ and $\Gamma'' = f''(\gamma)\tau^2 + i\kappa f'(\gamma)\tau$, 
we obtain that 
\begin{equation*}
\Gamma'' \, \overline{\Gamma'} = f''(\gamma)\overline{f'(\gamma)} \, \tau + i\kappa |f'(\gamma)|^2. 
\end{equation*}
By this and \eqref{eq:curvature}, we obtain that 
\begin{equation}\label{eq:image-curvature}
\kappa_{\Gamma} = \frac{\kappa + \operatorname{Im}\left( \tau f''(\gamma)/f'(\gamma) \right)}{|f'(\gamma)|} .
\end{equation}
The domain $f(U)$ is convex if and only if $\kappa_{\Gamma} \ge  0$ everywhere, which is \eqref{eq:criterion}.
\end{proof}

The next lemma gives a local form of the necessary condition in Lemma~\ref{lem:curvature}. 
It requires boundary regularity and holomorphic extension only near the boundary point under consideration, and allows $U$ to be unbounded.

\begin{Lem}\label{lem:local}
Let $U \subset \mathbb{C}$ be a convex domain and let $f$ be holomorphic and injective on $U$ with $f(U)$ convex.
Let $\zeta \in \partial U$.
Assume that $f$ extends holomorphically to a neighborhood of $\zeta$ with $f'(\zeta) \neq 0$, and that there is a disc $B$ centered at $\zeta$ such that $\partial U \cap B$ is a regular $C^2$ arc with $U \cap B$ on its left.
Let $\tau$ be the unit tangent and $\kappa$ the signed curvature of this arc at $\zeta$.
Then
\begin{equation}\label{eq:local}
\kappa + \operatorname{Im}\left( \tau \, \frac{f''(\zeta)}{f'(\zeta)} \right) \ge  0 .
\end{equation}
\end{Lem}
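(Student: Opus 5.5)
The plan is to compute the curvature of the image curve $f(\partial U \cap B)$ at $f(\zeta)$ exactly as in Lemma~\ref{lem:curvature}, and then show that this curvature must be nonnegative. The nonnegativity comes from a local convexity argument at a single boundary point, so no global Jordan-curve statement is needed.

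First, shrink $B$ so that $f$ is holomorphic on $B$, $f' \neq 0$ on $B$, and $f$ is injective on $B$; this is possible by the inverse function theorem since $f'(\zeta)\neq 0$. Parametrize $\partial U \cap B$ by arc length, $\gamma(s)$ with $\gamma(0)=\zeta$, and put $\Gamma \coloneqq f\circ\gamma$. The computation in the proof of Lemma~\ref{lem:curvature} is purely local, so it gives formula \eqref{eq:image-curvature} at $s=0$:
\[ \kappa_\Gamma(0) = \frac{\kappa + \operatorname{Im}\left(\tau f''(\zeta)/f'(\zeta)\right)}{|f'(\zeta)|}. \]
It therefore suffices to show $\kappa_\Gamma(0) \ge 0$.

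Next, identify the local geometry of the image. Since $f$ is an orientation-preserving local diffeomorphism on $B$ and $U\cap B$ lies to the left of $\gamma$, the set $f(U\cap B)$ lies locally to the left of $\Gamma$. Moreover $\Gamma$ is contained in $\partial f(U)$ near $f(\zeta)$. To see this, note that $f(\gamma(s))$ is a limit of points of $f(U)$, so it lies in $\overline{f(U)}$. It is not in $f(U)$: if $f(\gamma(s)) = f(u)$ with $u\in U$, then $u$ cannot lie in $B$ by injectivity on $B$. Points $u\in U$ far from $\zeta$ are excluded by continuity and injectivity of $f$ on $U$, which forces $f(u)$ to stay away from $f(\zeta)$. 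This last point is the step I expect to be the main obstacle, since $f$ is only assumed injective on $U$, not on $\overline U$.

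I would handle it with the open mapping theorem. Suppose $f(\gamma(s_n)) = f(u_n)$ with $u_n\in U\setminus B'$ for a smaller disc $B'$, and $s_n\to 0$. Take $u\in U\cap B'$ very close to $\gamma(s_n)$; then $f(u)$ is close to $f(u_n)$. Since $f$ is open near $u_n$, $f(u)$ is also attained near $u_n$, which contradicts injectivity on $U$. The contradiction is immediate when $u_n$ stays in a compact subset of $U$. If $u_n$ accumulates at $\partial U$ or at infinity, I would instead argue directly with convexity: $f(U)$ convex and $\Gamma \subset \overline{f(U)}$. Then $f(U)\cap f(B')$ is exactly the left side of $\Gamma$ near $f(\zeta)$, and any extra preimages would only add points of $f(U)$, which must still avoid the right side of $\Gamma$.

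The cleanest version of this uses only one fact: points slightly to the right of $\Gamma$ are not in $f(U)$. Indeed, their preimages under the local inverse lie outside $U$ near $\zeta$, and a point of $f(U)$ there would have a preimage in $U$ by openness of $f(U)$ combined with the local inverse. Granting this, let $L$ be a supporting line of the convex set $\overline{f(U)}$ at $f(\zeta)$. That line must be tangent to $\Gamma$, since the left side of $\Gamma$ lies in $f(U)$ and the right side does not. Hence $\Gamma$ lies on one side of its tangent line, namely the side opposite the left-hand normal $i\Gamma'(0)$, to second order. This gives $\operatorname{Im}\left(\Gamma''(0)\overline{\Gamma'(0)}\right) \ge 0$, i.e.\ $\kappa_\Gamma(0)\ge 0$, and \eqref{eq:local} follows.
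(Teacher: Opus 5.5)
Your overall route matches the paper's: transfer the curvature via \eqref{eq:image-curvature}, find a supporting line of $f(U)$ at $f(\zeta)$, and read off $\kappa_\Gamma(0)\ge 0$ from $\Gamma$ lying in a closed half-plane bounded by its tangent line. The gap is in the one step that needs a real argument, namely $f(\zeta)\notin f(U)$. Without it there is no supporting line at $f(\zeta)$.

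Your ``cleanest version'' claims that points $f(v)$ with $v\in B\setminus\overline U$ are not in $f(U)$, because ``a point of $f(U)$ there would have a preimage in $U$''. That preimage $u\in U$ may lie far from $\zeta$, outside $B$. The local inverse returns $v\notin U$, and injectivity on $U$ says nothing about the pair $(u,v)$, so no contradiction follows. Likewise, the claim that $f(U)\cap f(B')$ is exactly the left side of $\Gamma$ is asserted rather than proved. This stronger statement about the right side of $\Gamma$ is not needed anyway.

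The fix is the open-mapping argument you sketched and then set aside. Your worry about preimages accumulating at $\partial U$ or at infinity is unfounded, because for each single hypothetical preimage the argument closes. Concretely, suppose $f(\zeta)=f(z_0)$ with $z_0\in U$. Then $z_0\neq\zeta$ is an interior point, so take a disc $B_0\subset U$ centered at $z_0$ with $\zeta\notin\overline{B_0}$. By the open mapping theorem, $f(B_0)$ is a neighbourhood of $f(\zeta)$. By continuity of the extension, every $z\in U$ close enough to $\zeta$ satisfies $f(z)\in f(B_0)$ and $z\notin\overline{B_0}$, which contradicts injectivity on $U$. No sequences or compactness enter; this is exactly the paper's argument.

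With $f(\zeta)\in\overline{f(U)}\setminus f(U)$ you obtain a line $L$ through $f(\zeta)$ and an open half-plane $H\supset f(U)$. Tangency of $\Gamma$ to $L$ then follows from $\Gamma\subset\overline{f(U)}\subset\overline H$.

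Finally, watch the sign in your last step. Since $f(\zeta+it\tau)=f(\zeta)+it\tau f'(\zeta)+O(t^2)\in H$ for small $t>0$, the half-plane $H$ lies on the side \emph{of} the left normal $i\Gamma'(0)$, not opposite it. Then $\Gamma\subset\overline H$ gives $\operatorname{Im}\bigl(\Gamma''(0)\overline{\Gamma'(0)}\bigr)\ge 0$, whereas the side you wrote would give the reverse inequality.
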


\begin{proof}
Shrinking $B$, we may assume that $f$ is a diffeomorphism of $B$ onto an open set; see Figure~\ref{fig:local-curvature}.
Then $\Gamma \coloneqq  f(\partial U \cap B)$ is a regular $C^2$ arc through $f(\zeta)$ with $f(U \cap B)$ on its left, and by the computation leading to \eqref{eq:image-curvature} its signed curvature at $f(\zeta)$ equals $\frac{ \kappa + \operatorname{Im}( \tau f''(\zeta)/f'(\zeta) ) }{ |f'(\zeta)| }$.
Therefore, it suffices to show that the signed curvature of $\Gamma$ at $f(\zeta)$ is nonnegative.

We first show that $f(\zeta) \notin f(U)$.
Otherwise $f(\zeta) = f(z_0)$ for some $z_0 \in U$, and $z_0 \neq \zeta$ because $\zeta \notin U$.
Choose a disc $B_0 \subset U$ centered at $z_0$ with $\zeta \notin \overline{B_0}$.
By the open mapping theorem, $f(B_0)$ is a neighborhood of $f(z_0) = f(\zeta)$.
For $z \in U$ close to $\zeta$ we have $z \notin \overline{B_0}$ and $f(z) \in f(B_0)$, so $f(z) = f(z')$ for some $z' \in B_0$ with $z' \ne z$, contradicting the injectivity of $f$ on $U$.

Since $f(U)$ is open and convex and $f(\zeta) \in \overline{f(U)} \setminus f(U)$, there is a line $L$ through $f(\zeta)$ and an open half-plane $H$ bounded by $L$ with $f(U) \subset H$.
Since the arc $\Gamma \subset \overline{f(U)} \subset \overline{H}$ passes through $f(\zeta) \in L$, it is tangent to $L$ at $f(\zeta)$.
For small $t > 0$ the point $\zeta + i t \tau$ lies in $U \cap B$, and $f(\zeta + it\tau) = f(\zeta) + i t \tau f'(\zeta) + O(t^2)$ lies in $H$. 
Since $i\tau f'(\zeta)$ is a positive multiple of the left unit normal of $\Gamma$ at $f(\zeta)$, the half-plane $H$ is the left half-plane of $L$ with respect to the orientation of $\Gamma$. 
Here the left unit normal is obtained by rotating the unit tangent counterclockwise by $\pi/2$. 
Thus $\Gamma$ is contained in the closed left half-plane of its tangent line at $f(\zeta)$. 
Choose orthonormal coordinates centered at $f(\zeta)$, with the positive horizontal axis in the tangent direction and the positive vertical axis in the left normal direction. 
Then $\Gamma$ is locally the graph $y=n(x)$ of a $C^2$ function $n$.
Writing $\Gamma$ near $f(\zeta)$ as a graph over $L$ in the direction of the left normal, the graph function $n$ satisfies $n \ge  0$, $n(0) = 0$ and $n'(0) = 0$, hence $n''(0) \ge  0$, and the signed curvature of $\Gamma$ at $f(\zeta)$ equals $n''(0) \ge  0$.
\end{proof}

\begin{figure}[htbp]
\centering
\includegraphics[width=\linewidth]{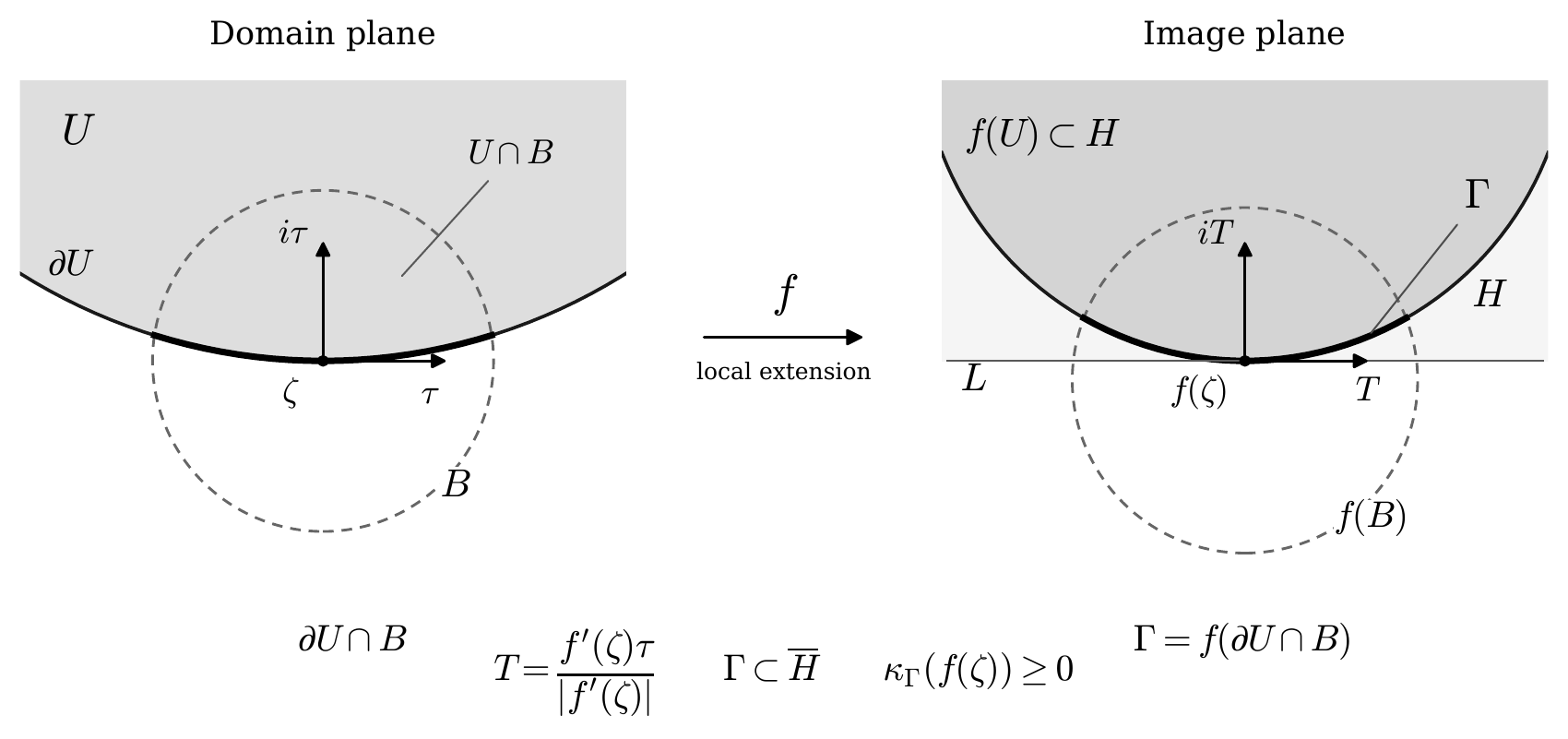}
\caption{Local geometry in Lemma~\ref{lem:local}.
The holomorphic extension of $f$ maps a neighborhood $B$ of $\zeta$
diffeomorphically onto $f(B)$.
The supporting line $L$ of $f(U)$ at $f(\zeta)$ is tangent to $\Gamma = f(\partial U\cap B)$, and $\Gamma$ lies in the closed left half-plane $\overline H$. 
The unit tangent is $T=f'(\zeta)\tau/|f'(\zeta)|$. 
}
\label{fig:local-curvature}
\end{figure}

For $f = \chi_p$ we have $f''/f' \equiv p$, and \eqref{eq:criterion} and \eqref{eq:local} become
\begin{equation}\label{eq:criterion-exp}
\kappa + \operatorname{Im}(p \tau) \ge  0 .
\end{equation}

\begin{Cor}[disks]\label{cor:disc}
Let $U \subset \mathbb{C}$ be a convex domain. \\
(1) If $U$ is bounded, $\partial U$ is a $C^2$ Jordan curve, and $\kappa \ge  \kappa_0 > 0$ on $\partial U$, then
$\{ p \in \mathbb{C} : |p| \le  \kappa_0, \ |p| \operatorname{diam}(U) < 2\pi \} \subset P_U$. \\
(2) If $U = \{ z \in \mathbb{C} : |z - c| < r \}$ with $c \in \mathbb{C}$ and $r > 0$, then $P_U = \{ p \in \mathbb{C} : |p| \le  1/r \}$.
\end{Cor}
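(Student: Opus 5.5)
For part (1) I will check, for each $p$ with $|p| \le \kappa_0$ and $|p|\operatorname{diam}(U) < 2\pi$, that $\chi_p$ and $\chi_{-p}$ belong to $\mathcal{H}(U)$. The case $p = 0$ is trivial, so let $p \neq 0$. Every $w \in \overline{U} - \overline{U}$ satisfies $|w| \le \operatorname{diam}(U)$, while $|2\pi i/p| = 2\pi/|p| > \operatorname{diam}(U)$. Applying Proposition \ref{prop:basic}(3) to the convex set $K = \overline{U}$, both $\chi_p$ and $\chi_{-p}$ are injective on $\overline{U}$. They are entire with nonvanishing derivative, so Lemma \ref{lem:curvature} applies. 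In the form \eqref{eq:criterion-exp}, convexity of $\chi_{\pm p}(U)$ reduces to $\kappa \pm \operatorname{Im}(p\tau) \ge 0$. Since $|\tau| = 1$, we have $|\operatorname{Im}(p\tau)| \le |p| \le \kappa_0 \le \kappa$, so both inequalities hold. Hence $p \in P_U$.

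For part (2), Proposition \ref{prop:basic}(2) lets me reduce to $c = 0$, $r = 1$, since $P_{c + rD} = r^{-1}P_D$ with $D$ the unit disc. The claim becomes $P_D = \{|p| \le 1\}$.

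For the inclusion $\supset$, I apply part (1) with $\kappa \equiv 1 = \kappa_0$ and $\operatorname{diam}(D) = 2$. If $|p| \le 1$, then $2|p| \le 2 < 2\pi$, so every such $p$ lies in $P_D$.

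For the inclusion $\subset$, let $p \in P_D$ with $p \neq 0$. I apply Lemma \ref{lem:local} (or Lemma \ref{lem:curvature}, once injectivity on the closure is known) to $f = \chi_p$ at every boundary point $\zeta = e^{is}$. The disc boundary is smooth, $f$ is entire, and $f' \neq 0$. The tangent is $\tau = ie^{is}$ and $\kappa = 1$, so the criterion gives
\[1 + \operatorname{Im}(p\, i e^{is}) = 1 + \operatorname{Re}(p e^{is}) \ge 0 \quad \text{for all } s.\]
Choosing $s$ with $pe^{is} = -|p|$ yields $|p| \le 1$.

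The main point needing care is that Lemma \ref{lem:local} only requires injectivity and convexity on the open set $U$, both of which are given by $p \in P_D$. So no closure injectivity issue arises, and this direction is straightforward.
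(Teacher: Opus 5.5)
Your proof is correct and follows essentially the same route as the paper. Part (1) uses injectivity on $\overline{U}$ via Proposition~\ref{prop:basic}(3) together with Lemma~\ref{lem:curvature}, and the reverse inclusion in (2) uses Lemma~\ref{lem:local} at the boundary point where $pe^{is}=-|p|$; the only cosmetic difference is that you normalize to $r=1$ via Proposition~\ref{prop:basic}(2), whereas the paper keeps $r$ and only translates $c$ to $0$.
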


\begin{proof}
(1) Let $|p| \le  \kappa_0$ and $|p| \operatorname{diam}(U) < 2\pi$.
Since $0 \in P_U$, we may assume that $p \ne 0$.
For $\zeta, \zeta' \in \overline{U}$ we have $|\zeta - \zeta'| \le  \operatorname{diam}(U) < 2\pi/|p| = |2\pi i/p|$, so $2\pi i/p \notin \overline{U} - \overline{U}$ and likewise $-2\pi i/p \notin \overline{U} - \overline{U}$.
By Proposition \ref{prop:basic}(3) with $K = \overline{U}$, both $\chi_p$ and $\chi_{-p}$ are injective on $\overline{U}$.
Moreover $\chi_{\pm p}' = \pm p \, \chi_{\pm p}$ has no zeros.
By \eqref{eq:criterion-exp}, $\kappa + \operatorname{Im}(\pm p\tau) \ge  \kappa_0 - |p| \ge  0$ on $\partial U$, so Lemma \ref{lem:curvature} shows that $\chi_p(U)$ and $\chi_{-p}(U)$ are convex.
Hence $p \in P_U$.

(2) By Proposition \ref{prop:basic}(2), we can assume that $c = 0$.
Since $\kappa \equiv 1/r$ and $\operatorname{diam}(U) = 2r$, part (1) gives $\{ |p| \le  1/r \} \subset P_U$. 

Let $p \in P_U \setminus \{0\}$.
Choose $\theta \in \mathbb{R}$ with $p \exp(i\theta) = -|p|$ and put $\zeta \coloneqq  r \exp(i\theta)$, so that the positively oriented unit tangent at $\zeta$ is $\tau = i \exp(i\theta)$.
Lemma \ref{lem:local} applied to $f = \chi_p$ at $\zeta$ gives, by \eqref{eq:criterion-exp},
\begin{equation*}
0 \le  \frac{1}{r} + \operatorname{Im}(p \, i \exp(i\theta)) = \frac{1}{r} + \operatorname{Re}(p \exp(i\theta)) = \frac{1}{r} - |p| ,
\end{equation*}
hence $|p| \le  1/r$.
\end{proof}

\begin{Prop}[flat boundary pieces]\label{prop:segment}
Let $U \subset \mathbb{C}$ be a convex domain. \\
(1) If $\partial U$ contains a nondegenerate line segment with direction $\tau \in \mathbb{C}$, $|\tau| = 1$, then $P_U \subset \{ p \in \mathbb{C} : p\tau \in \mathbb{R} \}$. \\
(2) If $\partial U$ contains two nonparallel nondegenerate line segments, then $P_U = \{0\}$. 
In particular, $P_U = \{0\}$ for every bounded convex polygon $U$.
\end{Prop}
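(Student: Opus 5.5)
The plan is to apply Lemma~\ref{lem:local} at an interior point of the segment, once with $f = \chi_p$ and once with $f = \chi_{-p}$, so that the two curvature conditions force $\operatorname{Im}(p\tau) = 0$.

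\textbf{Part (1).} Let $p \in P_U \setminus \{0\}$ (the case $p = 0$ is trivial, since $0\cdot\tau \in \mathbb{R}$). Let $[\zeta_0,\zeta_1] \subset \partial U$ be the segment, and take $\zeta$ to be its midpoint. We first set up the hypotheses of Lemma~\ref{lem:local} at $\zeta$.
\begin{itemize}
\item Since $U$ is convex and open, the line $L$ through the segment is a supporting line: an interior point of $U$ on $L$ would, by convexity, force points of the segment into the interior of $\overline U$, which is $U$.
\item Hence $U$ lies in one open half-plane bounded by $L$. Replacing $\tau$ by $-\tau$ if necessary (this does not affect the condition $p\tau\in\mathbb{R}$), we may assume $U$ lies to the left of $\tau$.
\item For a small disc $B$ centered at $\zeta$, we have $\partial U \cap B = L \cap B$, which is a straight arc with curvature $\kappa = 0$ and $U\cap B$ on its left. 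This needs the following check: $U \cap B$ equals the open half-disc on the left. A point $w$ of that half-disc can be written as a convex combination of a point of $U$ near $\zeta$ and points of the segment. More directly, $U$ contains a small open triangle with base on the segment, because $\operatorname{conv}(\{\zeta_0,\zeta_1\}\cup B_0)\setminus[\zeta_0,\zeta_1] \subset U$ for any disc $B_0 \subset U$. So for $B$ small enough the half-disc lies in $U$.
\end{itemize}

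Now $\chi_{\pm p}$ is entire with nonvanishing derivative, and it is injective on $U$ with convex image because $\pm p \in P_U$. Lemma~\ref{lem:local} together with \eqref{eq:criterion-exp} gives
\[
0 + \operatorname{Im}(p\tau) \ge 0 \quad\text{and}\quad \operatorname{Im}(-p\tau) \ge 0.
\]
Hence $\operatorname{Im}(p\tau) = 0$, that is, $p\tau \in \mathbb{R}$.

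\textbf{Part (2).} Suppose the two segments have unit directions $\tau_1, \tau_2$ with $\tau_1/\tau_2 \notin \mathbb{R}$. By part (1), any $p \in P_U$ satisfies $p\tau_1 = r_1 \in \mathbb{R}$ and $p\tau_2 = r_2 \in \mathbb{R}$. If $p \ne 0$, then $\tau_1/\tau_2 = r_1/r_2$ is real, a contradiction. So $p=0$, and since $0\in P_U$ by Proposition~\ref{prop:basic}(1), we get $P_U=\{0\}$. A bounded convex polygon has at least three sides, not all parallel, so the claim applies to it.

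The only delicate point is the local boundary description used to invoke Lemma~\ref{lem:local}: that near the midpoint the boundary is exactly the line and $U$ is the open half-disc on one side. The triangle argument above handles this.
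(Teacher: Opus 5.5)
Your proposal is correct and follows the paper's proof: Lemma~\ref{lem:local} is applied to $\chi_p$ and $\chi_{-p}$ at an interior point of the segment with $\kappa=0$, giving $p\tau\in\mathbb{R}$, and part (2) then follows from the ratio argument. The only difference is that you also verify the local boundary structure near $\zeta$ (supporting line plus the triangle argument), which the paper asserts in one line.
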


\begin{proof}
(1) Let $p \in P_U$ and let $\zeta$ be an interior point of the segment.
Replacing $\tau$ by $-\tau$ if necessary, we may assume that $U$ lies on the left of the segment oriented by $\tau$; the assertion is invariant under this replacement.
Near $\zeta$ the boundary of $U$ is the segment, so the hypotheses of Lemma \ref{lem:local} hold with $\kappa = 0$.
Applying Lemma \ref{lem:local} to $f = \chi_p$ and to $f = \chi_{-p}$, both of which belong to $\mathcal{H}(U)$, we obtain from \eqref{eq:criterion-exp} that $\operatorname{Im}(p\tau) \ge  0$ and $\operatorname{Im}(-p\tau) \ge  0$.
Hence $p\tau \in \mathbb{R}$.

(2) Let $\tau_1, \tau_2$ be the directions of the two segments and let $p \in P_U$.
By (1), $p\tau_1, p\tau_2 \in \mathbb{R}$.
If $p \neq 0$, then $\tau_1/\tau_2 = (p\tau_1)/(p\tau_2) \in \mathbb{R}$, so the segments are parallel, a contradiction.
A bounded convex polygon has at least three sides and hence two nonparallel sides.
\end{proof}

\begin{Prop}[strips]\label{prop:strip}
For $d > 0$ let $S_d \coloneqq  \{ z \in \mathbb{C} : |\operatorname{Im} z| < d/2 \}$.
Then $P_{S_d} = \{ p \in \mathbb{R} : |p| \le  \pi/d \}$.
\end{Prop}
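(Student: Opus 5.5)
The plan is to prove the two inclusions separately, combining Proposition \ref{prop:segment}(1) with an explicit description of the images $\chi_{\pm p}(S_d)$.

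\emph{Necessity.} The boundary of $S_d$ consists of the two horizontal lines $\operatorname{Im} z = \pm d/2$. These contain nondegenerate segments with direction $\tau = 1$, so Proposition \ref{prop:segment}(1) gives $P_{S_d} \subset \mathbb{R}$.

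Next, let $p \in P_{S_d}$ be real and nonzero. Injectivity then forces $|p| \le \pi/d$. By Proposition \ref{prop:basic}(3), $\chi_p$ is injective on $S_d$ exactly when $2\pi i/p \notin S_d - S_d = \{ |\operatorname{Im} z| < d\}$. Since $2\pi i/p$ is purely imaginary with modulus $2\pi/|p|$, this means $2\pi/|p| \ge d$, i.e. $|p| \le 2\pi/d$.

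That bound is weaker than the claim, so convexity of the image has to supply the remaining factor of $2$. For real $p > 0$ (the case $p<0$ follows from $P_U=-P_U$), write $z = x+iy$. Then
\[
\exp(pz) = e^{px} e^{ipy}, \qquad x \in \mathbb{R},\ |y|<d/2,
\]
so the image is the open sector $\{ w \ne 0 : |\arg w| < pd/2 \}$ of opening angle $pd$. This holds as long as $pd \le 2\pi$, where the map is injective. Such a sector is convex if and only if its opening angle is at most $\pi$, i.e. $pd \le \pi$. Hence $|p| \le \pi/d$.

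\emph{Sufficiency.} For real $p$ with $0<|p| \le \pi/d$, the same computation shows the following:
\begin{itemize}
\item $\chi_p$ and $\chi_{-p}$ are injective, since $|p| d \le \pi < 2\pi$;
\item their images are sectors of opening angle $|p|d \le \pi$, which are convex.
\end{itemize}
For $\chi_{-p}$ the sector is the same one, reflected by $\arg w \mapsto -\arg w$, or rotated by $\pi$. Together with $0 \in P_{S_d}$, this gives $\{p\in\mathbb{R}: |p|\le \pi/d\} \subset P_{S_d}$.

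The main point requiring care is the convexity criterion for sectors, including the boundary case of angle exactly $\pi$, where the image is an open half-plane and is still convex. One must also note that an open sector of angle greater than $\pi$ is not convex: there are two points in it whose connecting segment passes through a point of the omitted ray or through $0$. Everything else reduces to earlier results, so I expect the argument to be short.
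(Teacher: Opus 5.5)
Your proof is correct and follows the paper's argument step by step. Proposition \ref{prop:segment}(1) gives $P_{S_d}\subset\mathbb{R}$, and Proposition \ref{prop:basic}(3) gives injectivity exactly for $|p|d\le 2\pi$. The image is then an open sector of angle $|p|d$, which is convex exactly when $|p|d\le\pi$. The only difference is how angles in $(\pi,2\pi]$ are ruled out: the paper uses a supporting half-plane through $0$, while your two-point argument also works, for instance with $e^{\pm i\beta}$, $\pi/2<\beta<|p|d/2$, whose midpoint is a negative real.

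There are two small slips. The sentence ``Injectivity then forces $|p|\le\pi/d$'' should read $2\pi/d$, as your next lines show. Also, $\chi_{-p}(S_d)$ is literally the same sector, not its rotation by $\pi$.
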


\begin{proof}
The boundary of $S_d$ contains segments with direction $1$, so $P_{S_d} \subset \mathbb{R}$ by Proposition \ref{prop:segment}(1).
Since $S_d = -S_d$, it suffices to determine for which $p \in \mathbb{R}\setminus\{0\}$ we have $\chi_p \in \mathcal{H}(S_d)$.
We have $S_d - S_d = S_{2d}$, and $2\pi i/p \in S_{2d}$ if and only if $2\pi/|p| < d$.
By Proposition \ref{prop:basic}(3), $\chi_p$ is injective on $S_d$ if and only if $|p| d \le  2\pi$.

If $|p| d \le  \pi$, then $\chi_p(S_d) = \left\{ \rho \exp(i\theta) : \rho > 0, \ |\theta| < |p| d/2 \right\}$ is an open sector of opening angle $|p| d \le  \pi$, which is convex; hence $\chi_p \in \mathcal{H}(S_d)$.

If $\pi < |p| d \le  2\pi$, then $\chi_p(S_d)$ does not contain $0$ but contains the points $e^{ipy}$ with $|y| < d/2$, whose arguments fill an open interval of length $|p|d > \pi$.
A convex open set not containing $0$ is contained in an open half-plane $\{ w : \operatorname{Re}(\exp(-i\alpha) w) > 0 \}$ for some $\alpha \in \mathbb{R}$, and the arguments of the points of such a half-plane fill an interval of length $\pi$.
Hence $\chi_p (S_d)$ is not convex.

Therefore $\chi_p \in \mathcal{H}(S_d)$ if and only if $|p| d \le  \pi$, and the assertion follows.
\end{proof}

\begin{figure}[htbp]
\centering
\includegraphics[width=0.9\linewidth]{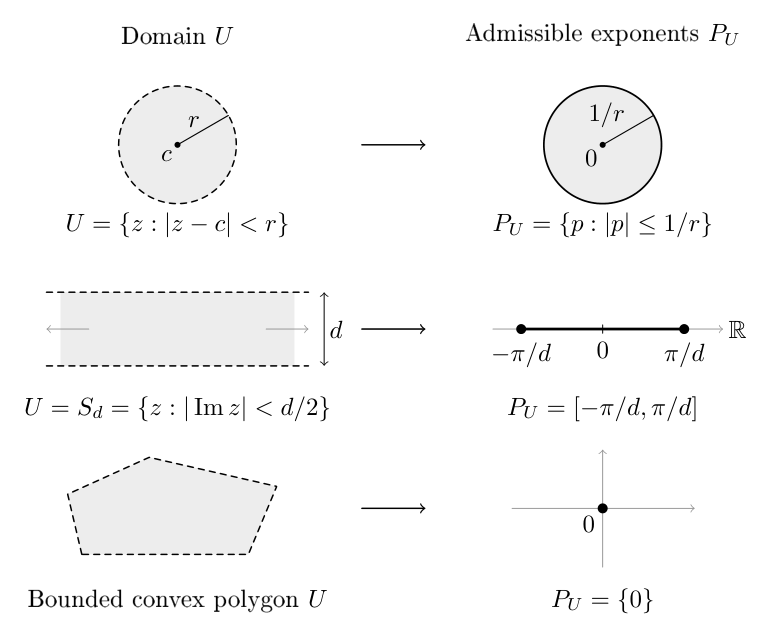}
\caption{Examples of $U\mapsto P_U$ (schematic). 
Half-planes also have $P_U=\{0\}$.}
\label{fig:domain-exponents}
\end{figure}

\begin{Rem}\label{rem:shape}
The examples show that the shape of $P_U$ reflects the geometry of $\partial U$.
Figure \ref{fig:domain-exponents} summarizes three representative cases.
A bounded domain with a $C^2$ boundary of strictly positive curvature admits a disc of exponents centered at $0$ (Corollary \ref{cor:disc}), a flat piece of boundary confines the exponent to a line through $0$ (Proposition \ref{prop:segment}(1)), two flat pieces in different directions or a difference set equal to $\mathbb{C}$ leave only the trivial solution (Propositions \ref{prop:segment}(2) and \ref{prop:basic}(4)), and the strip realizes the intermediate case of a real interval (Proposition \ref{prop:strip}).
\end{Rem}

\FloatBarrier
\section{Weighted means and properties of the shear solutions}\label{sec:monotone}

\subsection{Weighted means and shear pairs}

We extend Definition \ref{def:vqam} to weighted means in any number of variables and state the full version of Theorem \ref{thm:shear}. 
For notation, we follow \cite[Section  4]{Toth}. 

\begin{Def}\label{def:vqam-weighted}
Let $C \subset \mathbb{R}^2$ be a nonempty closed convex set, let $f \colon C \to \mathbb{R}^2$ be strictly increasing, let $m \ge  2$, and let $\lambda = (\lambda_1, \dots, \lambda_m)$ with $\lambda_i > 0$ and $\lambda_1 + \cdots + \lambda_m = 1$.
The $m$-variable weighted quasi-arithmetic mean with generator $f$ and the weighted arithmetic mean are
\begin{equation*}
\begin{aligned}
\mathcal{M}^{[m]}_{f,\lambda}(x_1, \dots, x_m)
&\coloneqq f^{(-1)}\left(\sum_{i=1}^m \lambda_i f(x_i)\right), \\
\mathcal{A}^{[m]}_{\lambda}(x_1, \dots, x_m)
&\coloneqq \sum_{i=1}^m \lambda_i x_i,
\end{aligned}
\end{equation*}
for $x_1, \dots, x_m \in C$.
For $m=2$ and $\lambda=(\tfrac{1}{2},\tfrac{1}{2})$, this agrees with Definition \ref{def:vqam}: $\mathcal{M}^{[2]}_{f,(1/2,1/2)}=\mathcal{M}_f$.
\end{Def}

For strictly increasing mappings $f,g \colon C \to \mathbb{R}^2$, the weighted Matkowski--Sut\^o equation is
\begin{equation}\label{eq:MSmon-weighted}
\mathcal{M}^{[m]}_{f,\lambda}(x_1, \dots, x_m)
+\mathcal{M}^{[m]}_{g,\lambda}(x_1, \dots, x_m)
=2\,\mathcal{A}^{[m]}_{\lambda}(x_1, \dots, x_m)
\end{equation}
for every $x_1, \dots, x_m \in C$.
Its two-variable case with equal weights is \eqref{eq:MSmon}.

\begin{Thm}\label{thm:shear-weighted}
Let $F \colon \mathbb{R} \to \mathbb{R}$ be a function, and let $\theta_F$ and $\eta_F$ be defined by \eqref{eq:shear}. \\
(1) $\theta_F$ is strictly increasing if and only if $|F(s)-F(t)|<2|s-t|$ for every $s,t\in\mathbb{R}$ with $s\neq t$. \\
(2) If the condition in (1) holds, then $\theta_F^{(-1)}=\eta_F$, $\eta_F^{(-1)}=\theta_F$, and for every $m\ge 2$ and every $\lambda=(\lambda_1,\dots,\lambda_m)$ with $\lambda_i>0$ and $\sum_{i=1}^m\lambda_i=1$,
\begin{equation}\label{eq:shearMS-weighted}
\mathcal{M}^{[m]}_{\theta_F,\lambda}(x_1,\dots,x_m)
+\mathcal{M}^{[m]}_{\eta_F,\lambda}(x_1,\dots,x_m)
=2\,\mathcal{A}^{[m]}_{\lambda}(x_1,\dots,x_m)
\end{equation}
for every $x_1,\dots,x_m\in\mathbb{R}^2$. \\
(3) If the condition in (1) holds, then $\mathcal{M}_{\theta_F}(x,y)=\tfrac{1}{2}(x+y)$ for every $x,y\in\mathbb{R}^2$ if and only if $F$ is affine.
\end{Thm}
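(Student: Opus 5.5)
Part (1) is a direct computation. For $u=(x_1,y_1)$, $v=(x_2,y_2)$, put $s=x_1-x_2$, $t=y_1-y_2$ and $\delta=F(x_1)-F(x_2)$. Then
\[
\langle \theta_F(u)-\theta_F(v),u-v\rangle = s^2+t^2+t\delta .
\]
If $s=0$ and $u\ne v$, then $\delta=0$ and the expression equals $t^2>0$. For $s\ne 0$, the quadratic $t\mapsto t^2+t\delta+s^2$ is positive for all real $t$ if and only if its discriminant $\delta^2-4s^2$ is negative, that is, $|\delta|<2|s|$. Conversely, if $|\delta|\ge 2|s|$ for some $s\ne 0$, choose $t=-\delta/2$. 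This gives a pair $u\ne v$ with value $s^2-\delta^2/4\le 0$, so $\theta_F$ is not strictly increasing. Since $\eta_F=\theta_{-F}$ and $-F$ satisfies the same condition, $\eta_F$ is strictly increasing as well.

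For (2), $\theta_F$ is a bijection of $\mathbb{R}^2$, so its image is convex. By the remark after Definition \ref{def:vqam}, the extended monotone left inverse is then the ordinary inverse, $\theta_F^{-1}=\eta_F$, and likewise $\eta_F^{(-1)}=\theta_F$. Now write $x_i=(a_i,b_i)$, $\bar a=\sum\lambda_i a_i$ and $\bar b=\sum\lambda_i b_i$. Then $\sum\lambda_i\theta_F(x_i)=(\bar a,\ \bar b+\sum\lambda_iF(a_i))$, and applying $\eta_F$ gives
\[
\mathcal{M}^{[m]}_{\theta_F,\lambda}(x_1,\dots,x_m)=\Bigl(\bar a,\ \bar b+\sum_i\lambda_iF(a_i)-F(\bar a)\Bigr).
\]
Replacing $F$ by $-F$ gives $\mathcal{M}^{[m]}_{\eta_F,\lambda}=\bigl(\bar a,\ \bar b-\sum_i\lambda_iF(a_i)+F(\bar a)\bigr)$. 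Adding the two formulas yields $2(\bar a,\bar b)=2\mathcal{A}^{[m]}_\lambda$, which is \eqref{eq:shearMS-weighted}.

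For (3), the formula with $m=2$ and equal weights shows that $\mathcal{M}_{\theta_F}$ is the arithmetic mean exactly when $F\bigl(\tfrac{s+t}{2}\bigr)=\tfrac{F(s)+F(t)}{2}$ for all $s,t\in\mathbb{R}$, which is Jensen's equation. If $F$ is affine, this clearly holds. For the converse, a Jensen function need not be affine in general, so I need regularity. The condition in (1) makes $F$ Lipschitz with constant $2$, hence continuous. A continuous solution of Jensen's equation is affine: $G(s)\coloneqq F(s)-F(0)$ is additive, by the standard argument $G(s+t)=2G\bigl(\tfrac{s+t}{2}\bigr)=G(s)+G(t)$, and a continuous additive function is linear. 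I expect this regularity step to be the only point needing care; everything else is explicit computation.
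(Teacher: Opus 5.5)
Your proposal is correct and follows the paper's proof essentially step for step: the same quadratic $s^2+t^2+t\delta$ in (1), the same explicit formula for the weighted means in (2), and the reduction to the Jensen equation, with continuity coming from the Lipschitz bound, in (3). The differences are cosmetic: you use the discriminant instead of the bound $t^2+t\delta\ge-\delta^2/4$, you identify $\theta_F^{(-1)}$ via the convex-image remark instead of T\'oth's uniqueness theorem, and you prove the continuous-Jensen-implies-affine step directly instead of citing Kuczma.
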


\begin{proof}
(1) Let $P = (s_1, t_1)$ and $Q = (s_2, t_2)$, and put $a \coloneqq  s_1 - s_2$, $b \coloneqq  t_1 - t_2$, $c \coloneqq  F(s_1) - F(s_2)$.
Then
\begin{equation}\label{eq:abc}
\langle \theta_F(P) - \theta_F(Q), P - Q \rangle = a^2 + b^2 + bc .
\end{equation}
Assume that $|F(s) - F(t)| < 2|s-t|$ for all $s \neq t$, and let $P \neq Q$.
If $a = 0$, then $c = 0$ and \eqref{eq:abc} equals $b^2 > 0$.
If $a \neq 0$, then $|c| < 2|a|$ and $b^2 + bc \ge  -c^2/4 > -a^2$, so \eqref{eq:abc} is positive.
Conversely, if $|F(s_1) - F(s_2)| \ge  2|s_1 - s_2|$ for some $s_1 \neq s_2$, choose $t_1, t_2$ with $t_1 - t_2 = -c/2$.
Then $P \neq Q$ and \eqref{eq:abc} equals $a^2 - c^2/4 \le 0$, so $\theta_F$ is not strictly increasing.

(2) The condition in (1) is invariant under $F \mapsto -F$, so $\eta_F = \theta_{-F}$ is strictly increasing as well.
Since $\theta_F(\mathbb{R}^2) = \mathbb{R}^2$ and $\eta_F$ is an increasing left inverse of $\theta_F$, the uniqueness in \cite[Theorem 2]{Toth} gives $\theta_F^{(-1)} = \eta_F$, and likewise $\eta_F^{(-1)} = \theta_F$.
Let $x_i = (s_i, t_i)$ for $i = 1, \dots, m$, and put
\begin{equation*}
\bar{s} \coloneqq  \sum_{i=1}^m \lambda_i s_i, \qquad \bar{t} \coloneqq  \sum_{i=1}^m \lambda_i t_i, \qquad \delta \coloneqq  \sum_{i=1}^m \lambda_i F(s_i) - F(\bar{s}) .
\end{equation*}
Then $\sum_i \lambda_i \theta_F(x_i) = \left( \bar{s}, \ \bar{t} + \sum_i \lambda_i F(s_i) \right)$, and applying $\eta_F$ gives
\begin{equation}\label{eq:shearmean}
\mathcal{M}^{[m]}_{\theta_F,\lambda}(x_1, \dots, x_m) = (\bar{s}, \ \bar{t} + \delta), \qquad
\mathcal{M}^{[m]}_{\eta_F,\lambda}(x_1, \dots, x_m) = (\bar{s}, \ \bar{t} - \delta),
\end{equation}
the second formula by the same computation with $-F$ in place of $F$.
Adding the two formulas yields \eqref{eq:shearMS-weighted}.

(3) By \eqref{eq:shearmean} with $m = 2$ and $\lambda = (\tfrac{1}{2},\tfrac{1}{2})$, the mean $\mathcal{M}_{\theta_F}$ is the arithmetic mean if and only if
$F\big( \tfrac{1}{2}(s_1 + s_2) \big) = \tfrac{1}{2}\big( F(s_1) + F(s_2) \big)$ for all $s_1, s_2 \in \mathbb{R}$.
The condition in (1) implies that $F$ is continuous, and a continuous solution of this Jensen equation is affine; see \cite[Section 13.2]{Kuczma}.
Conversely, every affine $F$ satisfies the Jensen equation.
\end{proof}

Taking $m=2$ and $\lambda=(\tfrac{1}{2},\tfrac{1}{2})$ in Theorem \ref{thm:shear-weighted} gives Theorem \ref{thm:shear}.

\subsection{Properties of the shear solutions}

The condition in Theorem \ref{thm:shear-weighted}(1) holds for every Lipschitz function with Lipschitz constant less than $2$, and for every $C^1$ function $F$ with $|F'| < 2$ on $\mathbb{R}$, by the mean value theorem.
It is sharp. 
Indeed, for $F(s) = 2s$ the mapping $\theta_F$ is affine with a singular symmetric part of its Jacobian, and it is not strictly increasing.
For every non-affine $F$ satisfying the condition, for instance $F(s) = \sin s$ or $F(s) = |s|$, Theorem \ref{thm:shear-weighted} provides a solution of \eqref{eq:MSmon-weighted} on $C = \mathbb{R}^2$, valid for all $m \ge  2$ and all weights $\lambda$ simultaneously, in which neither of the two means is the arithmetic mean.
Since the set of admissible $F$ contains all $C^1$ functions with $|F'| < 2$, the family of these solutions is infinite-dimensional.

\begin{Rem}\label{rem:shear-features}
(1) The solutions of Theorem \ref{thm:shear} need not be differentiable: for $F(s) = |s|$ the generators $\theta_F$ and $\eta_F$ are not differentiable at the points of the line $\{x = 0\}$.
In dimension one, by contrast, every continuous strictly monotone solution of the Matkowski--Sut\^o equation is analytic, since the solutions are the affine pairs and the exponential pairs \cite{DP}.
Thus the regularity theory of \cite{DP} does not extend to dimension two in the framework of strictly increasing generators. \\
(2) The exponential pairs of dimension one solve the weighted equation \eqref{eq:MSmon-weighted} on $\mathbb{R}$ if and only if $m=2$ and $\lambda = (\tfrac{1}{2},\tfrac{1}{2})$.
Indeed, let $f(t) = \exp(pt)$ and $g(t) = -\exp(-pt)$ with $p>0$, and suppose that \eqref{eq:MSmon-weighted} holds for some $m\ge 2$ and $\lambda$.
Fix $i\in\{1,\dots,m\}$, set $t_i=s$ and $t_j=0$ for $j\neq i$, and let $s\to+\infty$.
Since $0<\lambda_i<1$, we obtain
\begin{align*}
\mathcal{M}^{[m]}_{f,\lambda}(t_1,\dots,t_m) +\mathcal{M}^{[m]}_{g,\lambda}(t_1,\dots,t_m) &= \frac{1}{p}\log\frac{\lambda_i \exp(ps)+1-\lambda_i}{\lambda_i \exp(-ps) +1-\lambda_i} \\
&= s+\frac{1}{p}\log\frac{\lambda_i}{1-\lambda_i}+o(1).
\end{align*}

The right-hand side of \eqref{eq:MSmon-weighted} is $2\lambda_i s$, so division by $s$ and passage to the limit give $\lambda_i=\tfrac{1}{2}$.
This holds for every $i$, and $\sum_i\lambda_i=1$ therefore implies $m=2$.
Conversely, for $m=2$ and equal weights, the exponential pairs solve the equation by \eqref{eq:MS1d}.
The shear pairs, on the other hand, solve \eqref{eq:MSmon-weighted} for every $m$ and every $\lambda$, because the deviation $\delta$ in \eqref{eq:shearmean} changes sign under $F \mapsto -F$ regardless of the weights. \\
(3) Identifying $\mathbb{R}^2$ with $\mathbb{C}$, the generator $\theta_F$ is the map $z \mapsto z + iF(\operatorname{Re} z)$, which is entire if and only if $F$ is constant on $\mathbb{R}$, since a real-valued entire function is constant.
Hence for nonconstant $F$ the shear generators are not holomorphic on $\mathbb{C}$, in accordance with Theorem \ref{thm:main}. \\
(4) The construction extends verbatim to $\mathbb{R}^{d_1} \times \mathbb{R}^{d_2}$: for $F \colon \mathbb{R}^{d_1} \to \mathbb{R}^{d_2}$ with $|F(a) - F(b)| < 2|a - b|$ for $a \neq b$, the mappings $(x', x'') \mapsto (x', x'' \pm F(x'))$ are strictly increasing and satisfy \eqref{eq:MSmon-weighted} on $\mathbb{R}^{d_1 + d_2}$, with the same proof.
For a scalar function $F$ satisfying Theorem \ref{thm:shear}(1), the same argument applies to a real Hilbert space $H$ of dimension at least two: writing $H=\mathbb{R}^2\oplus H_0$ as an orthogonal direct sum, extend $\theta_F$ and $\eta_F$ by the identity on $H_0$.
The monotonicity calculation acquires the additional term $\|u-v\|^2$ for $u,v\in H_0$, and the $H_0$-component of each generated mean is the weighted arithmetic mean, so the conclusions remain valid.
The two-dimensional construction also applies on the closed cylinders $C = I \times \mathbb{R}$ with $I \subset \mathbb{R}$ a closed interval and $F \colon I \to \mathbb{R}$.
\end{Rem}

\section{Gauss composition of the solution pairs}\label{sec:gauss}

Dar\'oczy and P\'ales \cite[Section~1.2]{DP} study the simultaneous iteration of two means on a real interval and define their Gauss composition as the common limit. 
Theorem~1.5 of \cite{DP} establishes convergence for two continuous means when at least one is strict. 
We now show that the solution pairs considered in this paper also have a Gauss composition, and that it equals the arithmetic mean.

Let $C$ be a nonempty convex subset of $\mathbb R^2$ (identified with
$\mathbb C$ in the holomorphic setting), and let $M_1,M_2:C\times C\to C$ satisfy
$M_j(x,x) = x$ for $j=1,2$. Starting from $(x_0,y_0)\in C\times C$, define
\begin{equation}\label{eq:gauss-iteration}
  x_{n+1}=M_1(x_n,y_n),\qquad  y_{n+1}=M_2(x_n,y_n),\qquad n\ge 0.
\end{equation}
When both sequences $(x_n)_n$ and $(y_n)_n$ converge to the same point of $C$ for every initial pair,
we call the resulting map $(x_0,y_0) \mapsto \lim_{n\to\infty} x_n = \lim_{n \to \infty} y_n$
the Gauss composition of $M_1$ and $M_2$ and denote it by $M_1\otimes M_2$.
If the pair satisfies the Matkowski--Sut\^o equation, then
\begin{equation}\label{eq:gauss-center}
  x_n+y_n=x_0+y_0=2c,\qquad
  c\coloneqq\frac{x_0+y_0}{2}.
\end{equation}
Thus any common limit must equal $c$. The following propositions establish
the existence of this limit for the holomorphic and shear solutions.

\begin{Prop}[Holomorphic solution pairs]\label{prop:gauss-holomorphic}
Let $U\subset\mathbb C$ be a convex domain and let $p\in P_U$.
For $M_1=A_{\chi_p}$ and $M_2=A_{\chi_{-p}}$, the iteration \eqref{eq:gauss-iteration} satisfies
\begin{equation*}
  \lim_{n\to\infty} x_n = \lim_{n\to\infty} y_n  =\frac{x_0+y_0}{2}  \qquad ((x_0,y_0)\in U\times U).
\end{equation*}
The same holds for every solution $(A_{\varphi_1}, A_{\varphi_2})$ of \eqref{eq:MS}.
\end{Prop}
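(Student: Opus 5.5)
The plan is to reduce to the normalized pair $(\chi_p,\chi_{-p})$ and then compute the iteration explicitly. By Theorem \ref{thm:main} and Lemma \ref{lem:affine}, every solution $(A_{\varphi_1},A_{\varphi_2})$ of \eqref{eq:MS} coincides with $(A_{\chi_p},A_{\chi_{-p}})$ for some $p\in P_U$, so the last sentence follows from the first. For $p=0$ both means are arithmetic, so $x_1=y_1=c$, and the sequences are constant from $n=1$ on. Assume $p\neq0$. By \eqref{eq:gauss-center}, $x_n+y_n=2c$ for all $n$, so it suffices to show $x_n-y_n\to0$.

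The key identity is the following. Put $d_n\coloneqq (x_n-y_n)/2$, so that $x_n=c+d_n$ and $y_n=c-d_n$. By definition of $A_{\chi_p}$,
\begin{equation*}
\exp(p x_{n+1}) = \frac{\exp(p x_n)+\exp(p y_n)}{2} = \exp(pc)\cosh(p d_n).
\end{equation*}
Hence $\exp(p d_{n+1})=\exp(p(x_{n+1}-c))=\cosh(pd_n)$. Introduce $u_n\coloneqq \exp(pd_n)$, which is well defined without branch issues. Then
\begin{equation*}
u_{n+1}=\frac{u_n+u_n^{-1}}{2}.
\end{equation*}
This is the Newton iteration for $u^2=1$. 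With the Möbius substitution $w_n\coloneqq (u_n-1)/(u_n+1)$, a direct computation gives $w_{n+1}=w_n^2$, which is the "repeated squaring" announced in the introduction. Note that $u_n\neq -1$ is needed for $w_n$ to be defined; one checks $u_n\ne0$ automatically.

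The main obstacle is showing $|w_0|<1$, that is, $\operatorname{Re} u_0>0$, or equivalently $\operatorname{Re}\exp(p(x_0-y_0)/2)>0$. I would derive this from the structure of $P_U$.

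First, $x_0-y_0\in U-U$. Also $2\pi i/p\notin U-U$ by Proposition \ref{prop:basic}(3). Convexity of $\chi_p(U)$ should then give more: $\exp(px_0)$ and $\exp(py_0)$ lie in the convex set $\chi_p(U)$, which omits $0$. So their midpoint is nonzero, hence $u_0+u_0^{-1}\neq0$. This handles only a single step, however.

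The better route uses the fact that a convex open set avoiding $0$ lies in an open half-plane through $0$. The arguments of $\exp(px_0)$ and $\exp(py_0)$, with the difference taken along the segment $[x_0,y_0]\subset U$ (continuous argument of $\exp(pz)$ along the segment), therefore differ by less than $\pi$. Thus $|\operatorname{Im}(p(x_0-y_0))|<\pi$, which gives $|\arg u_0|<\pi/2$ and so $\operatorname{Re}u_0>0$.

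Then $|w_0|<1$ and $w_n=w_0^{2^n}\to0$, so $u_n\to1$. Finally, to get $d_n\to0$ rather than merely $d_n\in 2\pi i\mathbb{Z}/p$ in the limit, I would use the same argument bound for each $n$: since $x_n,y_n\in U$, we have $|\operatorname{Im}(pd_n)|<\pi/2$. Together with $\exp(pd_n)\to1$, this forces $pd_n\to0$. Hence $x_n,y_n\to c$.
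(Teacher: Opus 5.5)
Your proof is correct and follows the paper's argument. It uses the same reduction via Theorem \ref{thm:main} and Lemma \ref{lem:affine}, the same variable $u_n=\exp(p(x_n-c))$ (the paper's $q_n$), and the same M\"obius substitution giving $w_{n+1}=w_n^2$. There are two local differences. You obtain $\operatorname{Re}u_n>0$ from a separating half-plane and the continuous argument along $[y_n,x_n]$, which yields the uniform bound $|\operatorname{Im}(p(z-w))|<\pi$ on $U$; the paper instead uses that $\exp(-pc)\chi_p(U)$ contains $1$, $q_n$ and $q_n^{-1}$, hence the real point $2\operatorname{Re}q_n/(1+|q_n|^2)$. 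You also finish with the principal logarithm, where the paper uses the continuity of $\chi_p^{-1}$.
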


\begin{proof}
The case $p=0$ is immediate, so assume $p \ne 0$. 
With $c$ as in \eqref{eq:gauss-center}, let
\begin{equation*}
  q_n\coloneqq \exp(p(x_n-c)).
\end{equation*}
Since $p \in P_U$, both means take values in $U$ and satisfy the Matkowski--Sut\^o equation. 
Hence the iteration is well defined for every $n\ge0$, and \eqref{eq:gauss-center} holds.

Consider the convex set $W\coloneqq \exp(-pc) \chi_p(U)$. 
Then $W$ does not contain $0$. 
Since $c, x_n, y_n \in U$ and $y_n - c = -(x_n - c)$, $W$ contains $1$, $q_n$, and $q_n^{-1}$.
By convexity, $W$ also contains
\[  \frac{q_n+|q_n|^2q_n^{-1}}{1+|q_n|^2} = \frac{2\operatorname{Re}q_n}{1+|q_n|^2}. \]
If $\operatorname{Re}q_n \le 0$, the segment joining $\frac{2\operatorname{Re}q_n}{1+|q_n|^2}$ to $1$ would contain $0$, contradicting $0 \notin W$. 
Therefore $\operatorname{Re}q_n > 0$. 

Let $r_n\coloneqq \frac{q_n-1}{q_n+1}$. 
Then $r_n$ is well defined, and $|q_n-1| < |q_n+1|$ gives $|r_n|<1$.

By the definition of $A_{\chi_p}$, $x_{n+1} = A_{\chi_p} (x_n, y_n)$ and $x_n + y_n = 2c$, 
we obtain that 
\[ \exp(px_{n+1}) = \frac{\exp(p x_n) + \exp(p y_n)}{2},  \qquad  q_{n+1} = \frac{q_n+q_n^{-1}}2. \]
Since 
\[  r_{n+1}  =\frac{q_n+q_n^{-1}-2}{q_n+q_n^{-1}+2}  =\left(\frac{q_n-1}{q_n+1}\right)^2 =r_n^2, \]
it holds that  
\begin{equation}\label{eq:gauss-squaring}
  r_{n+1} = r_n^2 \ \ (n \ge 0), \qquad |r_0|<1,
\end{equation}
and hence
\[ q_n=\frac{1+r_0^{\,2^n}}{1-r_0^{\,2^n}} \longrightarrow1, \quad n \to \infty. \]
Thus $\chi_p(x_n)\to\chi_p(c)$. 
The inverse of $\chi_p:U\to\chi_p(U)$ is continuous, so $x_n\to c$.
Since $y_n = 2c - x_n$, we also have $y_n\to c$.
Finally, affine changes of the generators leave the corresponding means unchanged, so the classification in Theorem~\ref{thm:main} gives the last assertion.
\end{proof}

\begin{Rem}\label{rem:gauss-quadratic}
For $p\neq0$, formula \eqref{eq:gauss-squaring} gives exact squaring in the coordinate $r_n$. 
For $x_0\neq y_0$, this is also quadratic convergence
in the original coordinates. Indeed, writing $d_n = x_n-c$, we see that $\exp(pd_{n+1}) = \cosh(pd_n)$. 
Since $d_n\to0$, Taylor expansion of the local inverse of
$d \mapsto \exp(pd)$ at the value $1$ gives
\begin{equation*}
  d_{n+1} = \frac{p}{2} d_n^2 + O(d_n^4).
\end{equation*}
Here the inverse is chosen to take the value $0$ at $1$; it exists because
the derivative of $d\mapsto \exp(pd)$ at $0$ is $p\neq0$.
\end{Rem}

\begin{Prop}[Shear solution pairs]\label{prop:gauss-shear}
Let $F$ satisfy the condition in Theorem~\ref{thm:shear}(1),
and let $\theta_F$ and $\eta_F$ be as in \eqref{eq:shear}.
For $M_1 = \mathcal M_{\theta_F}$ and $M_2 = \mathcal M_{\eta_F}$,
the iteration \eqref{eq:gauss-iteration} reaches its common limit after
at most two iterations:
\begin{equation*}
  x_n = y_n = \frac{x_0+y_0}{2}\qquad(n\geq2).
\end{equation*}
\end{Prop}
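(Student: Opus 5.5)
The plan is to compute the iteration explicitly using formula \eqref{eq:shearmean} from the proof of Theorem~\ref{thm:shear-weighted}. We specialize it to $m=2$ and $\lambda=(\tfrac12,\tfrac12)$. Write $x_n=(s_n,t_n)$ and $y_n=(s_n',t_n')$. Then \eqref{eq:shearmean} gives
\begin{equation*}
x_{n+1}=\Bigl(\bar s_n,\ \bar t_n+\delta_n\Bigr),\qquad y_{n+1}=\Bigl(\bar s_n,\ \bar t_n-\delta_n\Bigr),
\end{equation*}
with $\bar s_n=\tfrac12(s_n+s_n')$, $\bar t_n=\tfrac12(t_n+t_n')$ and $\delta_n=\tfrac12\bigl(F(s_n)+F(s_n')\bigr)-F(\bar s_n)$. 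The iteration is well defined on all of $\mathbb{R}^2$, since $\theta_F^{(-1)}=\eta_F$ and $\eta_F^{(-1)}=\theta_F$ are defined everywhere by Theorem~\ref{thm:shear-weighted}(2).

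The key observation is that after one step the first coordinates coincide: $s_1=s_1'=\bar s_0$. Hence at the next step $\bar s_1=s_1$ and
\[\delta_1=\tfrac12\bigl(F(s_1)+F(s_1)\bigr)-F(s_1)=0.\]
It follows that $x_2=y_2=(\bar s_1,\bar t_1)=\tfrac12(x_1+y_1)$. By \eqref{eq:gauss-center}, which follows from Theorem~\ref{thm:shear}(2), this equals $\tfrac12(x_0+y_0)$.

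For $n\ge2$ the two points already coincide. Since $M_j(x,x)=x$ (the midpoint of $\theta_F(x)$ with itself is $\theta_F(x)$, and $\eta_F$ inverts it), the sequence is constant from $n=2$ onward. This completes the argument.

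There is no real obstacle here. The only points to verify are that \eqref{eq:shearmean} applies verbatim at each step, which holds because $C=\mathbb{R}^2$ and the formula is valid for arbitrary inputs, and the bookkeeping showing that $\delta_1$ vanishes because it is a Jensen defect evaluated at coinciding arguments. One may add the remark that two steps are needed in general, because $x_1\neq y_1$ whenever $\delta_0\neq0$.
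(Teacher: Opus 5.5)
Your proposal is correct and follows the paper's proof essentially verbatim. Both arguments use \eqref{eq:shearmean} with $m=2$ and equal weights to see that $x_1$ and $y_1$ share their first coordinate, so the correction term $\delta_1$ vanishes and $x_2=y_2=\tfrac12(x_1+y_1)=\tfrac12(x_0+y_0)$ by \eqref{eq:gauss-center}, after which the iterates stay fixed. Your added observation that $x_1-y_1=(0,2\delta_0)$, so two steps are needed in general, is a correct complement.
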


\begin{proof}
By \eqref{eq:shearmean} with $m=2$ and equal weights,
$x_1$ and $y_1$ have the same first coordinate.
The correction term in the same formula therefore vanishes at the next
iteration, giving $x_2 = y_2 = (x_1+y_1)/2 = (x_0+y_0)/2$, 
where the last equality follows from \eqref{eq:gauss-center}.
All subsequent iterates remain at this point.
\end{proof}

\begin{Rem}\label{rem:gauss-arbitrary-shear}
The two-step identity is algebraic. 
It holds for every function $F \colon \mathbb R\to\mathbb R$ if the two maps are defined using the ordinary inverses of $\theta_F$ and $\eta_F$. 
The condition on $F$ in Proposition~\ref{prop:gauss-shear} ensures that these generators belong to the strictly increasing framework of Theorem~\ref{thm:shear}.
\end{Rem}

For either class of solution pairs, let $C=U$ in the holomorphic case and $C=\mathbb R^2$ in the shear case. 
The arithmetic mean is also the unique continuous map $K:C\times C\to C$ satisfying
\[ K(x,x)=x,\quad K(M_1(x,y),M_2(x,y))=K(x,y).\]
Indeed, invariance gives $K(x_0,y_0)=K(x_n,y_n)$ for all $n$, and
the convergence proved above yields
\[  K(x_0,y_0)=K(c,c)=c=\frac{x_0+y_0}{2}.\]
Thus the interpretation of the Matkowski--Sut\^o equation through an invariant Gauss composition also holds for the two classes considered here. 
Nielsen and the author study Gauss composition for midpoint maps associated with an affine connection and its metric dual.
In that setting, invariance of the Riemannian midpoint implies local quadratic convergence under suitable regularity assumptions \cite[Theorem~3.2]{NielsenOkamura}.

\section{Concluding remarks and open problems}\label{sec:conclusion}

\subsection{Comparison of the two frameworks}\label{subsec:comparison}

The holomorphic and monotone frameworks impose different conditions on the generators.
Throughout this subsection, we identify $\mathbb{C}$ with $\mathbb{R}^2$ and use the Euclidean inner product and norm.
Let $\varphi$ be holomorphic on a convex domain $U$.
Integrating $\varphi'$ along the segment from $w$ to $z$ gives
\begin{equation}\label{eq:monotone}
\langle \varphi(z) - \varphi(w), z - w \rangle = |z-w|^2 \int_0^1 \operatorname{Re} \varphi'\big( w + t(z-w) \big)\, dt, \qquad z, w \in U ,
\end{equation}
so $\varphi$ satisfies the strict monotonicity inequality in Definition \ref{def:increasing} if $\operatorname{Re} \varphi' > 0$ on $U$, and $\varphi$ is not increasing if $\operatorname{Re} \varphi'$ takes a negative value.

A generator in $\mathcal{H}(U)$ need not be increasing, and this cannot always be remedied by an affine change of the generator as in Lemma \ref{lem:affine}, which does not change the mean.
For instance, $\varphi(z) \coloneqq  z/(1-z)$ belongs to $\mathcal{H}(U)$ for the unit disc $U$, since it maps $U$ onto the half-plane $\{ w : \operatorname{Re} w > -1/2 \}$, and $\varphi'(z) = (1-z)^{-2}$ has an argument filling the interval $(-\pi, \pi)$ as $z$ ranges over $U$; hence $\operatorname{Re}(a \varphi')$ changes sign on $U$ for every $a \in \mathbb{C} \setminus \{0\}$, and no $a\varphi + b$ is increasing or decreasing.
Even a real-linear change $L \circ \varphi + b$ with $L \in \mathrm{GL}(2,\mathbb{R})$ does not help: the Jacobian of $\varphi$ at $z$ is $|\varphi'(z)|$ times the rotation by $\arg \varphi'(z)$, and if $L \circ \varphi$ were increasing, the symmetric part of $L R_{\theta}$ would be positive semidefinite for every rotation $R_{\theta}$ with $\theta \in (-\pi,\pi)$; applied to $\theta$ and $\theta + \pi$ this forces $L R_{\theta}$ to be antisymmetric for all $\theta \in (-\pi, 0)$, which is impossible for an invertible $L$.

Conversely, a holomorphic map satisfying the strict monotonicity inequality in Definition \ref{def:increasing} need not belong to $\mathcal{H}(U)$: on the half disc $\{ z : |z| < 1, \ \operatorname{Re} z > 0 \}$ the map $z \mapsto z^2$ satisfies $\operatorname{Re}(2z) > 0$, hence the inequality by \eqref{eq:monotone}, but its image, the unit disc slit along $(-1,0]$, is not convex. 
As shown in \cite[Example 1]{Toth}, this map admits no extended monotone left inverse; the half disc is not closed, so \cite[Theorem 2]{Toth} does not apply.

The two settings also differ in the domain, which is closed in Definition \ref{def:increasing} and open in Definition \ref{def:H}, and in the equivalence of generators, which is complex affine in the holomorphic setting.

On $C = \mathbb{R}^2$, \cite[Theorem 4]{Toth} shows that the only strictly increasing generators of the arithmetic mean are the affine ones. 
This is consistent with $P_{\mathbb{C}} = \{0\}$ (Proposition \ref{prop:basic}).

In contrast to general elements of $\mathcal{H}(U)$, the solutions of \eqref{eq:MS} admit increasing representatives.
If $p \in P_U \setminus \{0\}$, then $\chi_p(U)$ is a convex domain not containing $0$, hence it is contained in an open half-plane $\{ w : \operatorname{Re}\left(\exp(-i\alpha) w\right) > 0 \}$ for some $\alpha \in \mathbb{R}$.
The generator $f \coloneqq  \exp(-i\alpha) p^{-1} \chi_p$ satisfies $A_f = A_{\chi_p}$ by Lemma \ref{lem:affine} and $\operatorname{Re} f' = \operatorname{Re}\left( \exp(-i\alpha) \exp(pz) \right) > 0$ on $U$, so $f$ is strictly increasing by \eqref{eq:monotone}; the same applies to $\chi_{-p}$.
Thus, up to affine changes of the generators, the exponential pairs of Theorem \ref{thm:main} satisfy the strict monotonicity inequality in Definition \ref{def:increasing}, although general elements of $\mathcal{H}(U)$ need not satisfy it.
The shear generators of Theorem \ref{thm:shear} with nonconstant $F$ are not holomorphic on $\mathbb{C}$ (Remark \ref{rem:shear-features}), 
so the rigidity expressed by Theorem \ref{thm:main} is a consequence of holomorphy rather than of monotonicity.

\subsection{Open problems}\label{subsec:open}

The holomorphic classification and the shear examples raise two related questions about the two-variable Matkowski--Sut\^o equation with equal weights.
These questions may be considered on convex subsets of real Hilbert spaces, possibly of infinite dimension, and on Banach spaces with an appropriate notion of monotonicity.
Affine equivalence below means composition of each generator with an invertible continuous affine transformation of its values.

\begin{Que}[Additional assumptions for classification]\label{que:classification}
What structural assumptions on the ambient space, the domain, and the generators, in addition to affine equivalence to strictly increasing mappings, yield an analogue of the affine/exponential classification?
In general ambient spaces, identifying suitable analogues of the exponential pairs $(\chi_p,\chi_{-p})$ in \eqref{eq:chi} is itself part of the problem.
\end{Que}

Section~\ref{sec:P} shows that the admissible exponents in the holomorphic setting depend on the domain, while Remark~\ref{rem:shear-features}(4) gives shear solutions on closed cylinders.
This leads to the following question.

\begin{Que}[Dependence on the domain]\label{que:domain}
How does the classification depend on the geometry of the domain?
In particular, how does it change when the generators are defined on a proper convex subset of the ambient space?
\end{Que}

\vspace{1pc}

\noindent{\bf Use of generative AI} \ During the preparation of this work, the author used Claude and ChatGPT to assist with exploration, structuring, drafting, and refining the text. 
After using these tools, the author reviewed and edited the content as needed. 
The author takes full responsibility for the content and integrity of the publication. 

\bibliographystyle{amsplain}
\bibliography{ms2d}

\end{document}